\begin{document}

\newtheorem{theorem}{Theorem}
\newtheorem{lemma}[theorem]{Lemma}
\newtheorem{lem}[theorem]{Lemma}
\newtheorem{thm}[theorem]{Theorem}
\newtheorem{claim}[theorem]{Claim}
\newtheorem{cor}[theorem]{Corollary}
\newtheorem{prop}[theorem]{Proposition}
\newtheorem{definition}{Definition}
\newtheorem{question}[theorem]{Question}
\newtheorem{conj}[theorem]{Conjecture}
\newtheorem{remark}[theorem]{Remark}
\newcommand{\hh}{{{\mathrm h}}}

\numberwithin{equation}{section}
\numberwithin{theorem}{section}

\def\sssum{\mathop{\sum\!\sum\!\sum}}
\def\ssum{\mathop{\sum\ldots \sum}}

\def \balpha{\boldsymbol\alpha}
\def \bbeta{\boldsymbol\beta}
\def \bgamma{{\boldsymbol\gamma}}
\def \bomega{\boldsymbol\omega}

\newcommand{\Res}{\mathrm{Res}\,}
\newcommand{\Gal}{\mathrm{Gal}\,}

\def\sssum{\mathop{\sum\!\sum\!\sum}}
\def\ssum{\mathop{\sum\ldots \sum}}
\def\dsum{\mathop{\sum\  \sum}}
\def\iint{\mathop{\int\ldots \int}}

\def\squareforqed{\hbox{\rlap{$\sqcap$}$\sqcup$}}
\def\qed{\ifmmode\squareforqed\else{\unskip\nobreak\hfil
\penalty50\hskip1em\null\nobreak\hfil\squareforqed
\parfillskip=0pt\finalhyphendemerits=0\endgraf}\fi}

\newfont{\teneufm}{eufm10}
\newfont{\seveneufm}{eufm7}
\newfont{\fiveeufm}{eufm5}
%
%
\newfam\eufmfam
     \textfont\eufmfam=\teneufm
\scriptfont\eufmfam=\seveneufm
     \scriptscriptfont\eufmfam=\fiveeufm
%
%
\def\frak#1{{\fam\eufmfam\relax#1}}

\def\fK{\mathfrak K}
\def\fT{\mathfrak{T}}

\def\fA{{\mathfrak A}}
\def\fB{{\mathfrak B}}
\def\fC{{\mathfrak C}}
\def\fD{{\mathfrak D}}
\def\fM{{\mathfrak M}}

\newcommand{\sX}{\ensuremath{\mathscr{X}}}

\def\eqref#1{(\ref{#1})}

\def\vec#1{\mathbf{#1}}
\def\dist{\mathrm{dist}}
\def\vol#1{\mathrm{vol}\,{#1}}

\def\squareforqed{\hbox{\rlap{$\sqcap$}$\sqcup$}}
\def\qed{\ifmmode\squareforqed\else{\unskip\nobreak\hfil
\penalty50\hskip1em\null\nobreak\hfil\squareforqed
\parfillskip=0pt\finalhyphendemerits=0\endgraf}\fi}

\def\sA{\mathscr A}
\def\sB{\mathscr B}
\def\sC{\mathscr C}
\def\sD{\Delta}
\def\sE{\mathscr E}
\def\sF{\mathscr F}
\def\sG{\mathscr G}
\def\sH{\mathscr H}
\def\sI{\mathscr I}
\def\sJ{\mathscr J}
\def\sK{\mathscr K}
\def\sL{\mathscr L}
\def\sM{\mathscr M}
\def\sN{\mathscr N}
\def\sO{\mathscr O}
\def\sP{\mathscr P}
\def\sQ{\mathscr Q}
\def\sR{\mathscr R}
\def\sS{\mathscr S}
\def\sU{\mathscr U}
\def\sT{\mathscr T}
\def\sV{\mathscr V}
\def\sW{\mathscr W}
\def\sX{\mathscr X}
\def\sY{\mathscr Y}
\def\sZ{\mathscr Z}

\def\cA{{\mathcal A}}
\def\cB{{\mathcal B}}
\def\cC{{\mathcal C}}
\def\cD{{\mathcal D}}
\def\cE{{\mathcal E}}
\def\cF{{\mathcal F}}
\def\cG{{\mathcal G}}
\def\cH{{\mathcal H}}
\def\cI{{\mathcal I}}
\def\cJ{{\mathcal J}}
\def\cK{{\mathcal K}}
\def\cL{{\mathcal L}}
\def\cM{{\mathcal M}}
\def\cN{{\mathcal N}}
\def\cO{{\mathcal O}}
\def\cP{{\mathcal P}}
\def\cQ{{\mathcal Q}}
\def\cR{{\mathcal R}}
\def\cS{{\mathcal S}}
\def\cT{{\mathcal T}}
\def\cU{{\mathcal U}}
\def\cV{{\mathcal V}}
\def\cW{{\mathcal W}}
\def\cX{{\mathcal X}}
\def\cY{{\mathcal Y}}
\def\cZ{{\mathcal Z}}
\newcommand{\rmod}[1]{\: \mbox{mod} \: #1}

\def\rE{\mathrm E}
\def\rS{\mathrm S}

\def\vr{\mathbf r}

\def\e{{\mathbf{\,e}}}
\def\ep{{\mathbf{\,e}}_p}
\def\em{{\mathbf{\,e}}_m}
\def\en{{\mathbf{\,e}}_n}

\def\Tr{{\mathrm{Tr}}}
\def\Nm{{\mathrm{Nm}}}
\def\supp{{\mathrm{supp}}}

\def\gmax{g}
\def\ve{\varepsilon}
\def\rot{\operatorname{rot}}
\def\ord{\operatorname{ord}}

\def\lcm{{\mathrm{lcm}}}
\def\lcm{{\mathrm{lcm}}}

\def \ovFp{\overline{\F}_p}

\def\({\left(}
\def\){\right)}
\def\fl#1{\left\lfloor#1\right\rfloor}
\def\rf#1{\left\lceil#1\right\rceil}

\definecolor{olive}{rgb}{0.3, 0.4, .1}
\definecolor{dgreen}{rgb}{0.,0.5,0.}

\def\mand{\qquad \mbox{and} \qquad}




\hyphenation{re-pub-lished}
\hyphenation{ne-ce-ssa-ry}

\parskip 4pt plus 2pt minus 2pt

\def\bfdefault{b}
\overfullrule=5pt

\def \F{{\mathbb F}}
\def \K{{\mathbb K}}
\def \L{{\mathbb L}}
\def \N{{\mathbb N}}
\def \Z{{\mathbb Z}}
\def \Q{{\mathbb Q}}
\def \R{{\mathbb R}}
\def \C{{\mathbb C}}
\def\Fp{\F_p}
\def \fp{\Fp^*}

\title[Polynomial Equations  in Subgroups]{
Polynomial Equations in Subgroups and Applications}

\author[S. Konyagin]{Sergei V.~Konyagin}
\address{Steklov Mathematical Institute,
8, Gubkin Street, Moscow, 119991, Russia} 
\email{konyagin@mi.ras.ru}

\author[I.   Shparlinski]{Igor E. Shparlinski}
\address{Department of Pure Mathematics, University of New South Wales\\
2052 NSW, Australia}
\email{igor.shparlinski@unsw.edu.au}

\author[I.   Vyugin]{Ilya V. Vyugin}
\address{Institute for Information Transmission Problems RAS\\
19, Bolshoy Karetny per., Moscow, 127051, Russia\\
and\\
Department of Mathematics, National Research University Higher School of Economics\\
6, Usacheva Street, Moscow, 119048, Russia\\
and\\
Steklov Mathematical Institute,\\
8, Gubkin Street, Moscow, 119991, Russia} 
\email{vyugin@gmail.com}

\begin{abstract} We obtain a new bound for the number 
of solutions to polynomial equations in cosets of multiplicative subgroups in finite fields, which generalises previous results of P.~Corvaja and U.~Zannier (2013). 
We also obtain a conditional improvement of  recent results of J.~Bourgain,  A.~Gamburd and P.~Sarnak (2016) and
S.~V.~Konyagin, S.~V.~Makarychev,  I.~E.~Shparlinski and  I.~V.~Vyugin (2019) on the structure 
of solutions to the reduction of the Markoff equation $x^2 +  y^2 + z^2 = 3 x yz$ 
modulo a prime $p$. 
\end{abstract}

\keywords{Polynomial equation, Markoff equation,  reduction modulo $p$}
\subjclass[2010]{11D79, 11T06}

\maketitle

\section{Introduction}

\subsection{Background and motivation}
 Bourgain, Gamburd and Sarnak~\cite{BGS1,BGS2} have recently initiated the study 
 of reductions modulo $p$ of  the set $\cM$ of {\it Markoff triples\/} $(x, y, z) \in \N^3$  which are positive integer solutions to the Diophantine equation 
\begin{equation}
\label{eq:Markoff}
x^2 +  y^2 + z^2 = 3 x yz, \qquad (x, y, z)\in \Z^3.
\end{equation}
Simple computation shows  that the map 
$$\cR_1:~(x, y, z) \mapsto (3 yz - x,  y, z)
$$ 
and similarly defined maps $\cR_2$, $\cR_3$ (which are all involutions),  send one Markoff triple to another.
Due to the symmetry of~\eqref{eq:Markoff}, the set $\cM$ is also invariant under  permutations  $ \Pi \in \rS_3$ 
of the components of $(x, y, z)\in \cM$.  It is also easy to check that the transformations 
$\cR_i$, $i=1,2,3$ and permutations $\Pi$ generate a group of transformations acting on $\cM$. 
 
 A celebrated  result of Markoff~\cite{Mar1,Mar2}  asserts that  all integer solutions to~\eqref{eq:Markoff} can 
 be generated  from the solution $(1,1,1)$ by using sequences of  the above transformations.

This naturally leads to the notion of the {\it functional graph\/} on Markoff triples with the ``root''  $(1,1,1)$,  
 and  edges
 $(x_1, y_1,z_1)  \to  (x_2, y_2,z_2)$, povided that  $(x_2, y_2,z_2) = \cT(x_1, y_1,z_1)$,  where 
\begin{equation}
\label{eq:Transf}
 \cT=\{\cR_1,\cR_2,\cR_3\}\cup \rS_3.
 \end{equation}
 In this terminology the  result of Markoff~\cite{Mar1,Mar2} asserts 
 then this graph is {\it connected\/}.

 Baragar~\cite[Section~V.3]{Bar} and, more recently, 
 Bourgain, Gamburd and Sarnak~\cite{BGS1,BGS2} conjecture that this property is preserved modulo all 
sufficiently large primes and the set of non-zero solutions $\cM_p$
to~\eqref{eq:Markoff} considered modulo $p$. In particular, this means that $\cM_p$ 
can be obtained from the set of Markoff triples  
$\cM$ reduced modulo $p$. 

 This conjecture, which we can also as write $\cM_p = \cM \pmod p$, means that the functional graph $\cX_p$ associated 
with the transformation~\eqref{eq:Transf} remains connected.  

Accordingly, if we define by $\cC_p\subseteq \cM_p$ the set of the triples in the largest connected component
of the above graph $\cX_p$,  then we can state:

\begin{conj}[Baragar~\cite{Bar}; Bourgain, Gamburd and Sarnak~\cite{BGS1,BGS2}] 
\label{conj:BGS} 
For every prime $p$  we have $\cC_p = \cM_p$.
\end{conj}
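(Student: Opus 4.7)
The strategy is to argue by contradiction using the new polynomial-in-subgroups bound of this paper as the main tool. Assume $E := \cM_p \setminus \cC_p$ is nonempty; since $E$ is invariant under the group $\Gamma$ generated by $\cR_1,\cR_2,\cR_3$ together with $\rS_3$, it is a union of whole $\Gamma$-orbits, each disjoint from the orbit of $(1,1,1)$. The goal is to show that no such orbit exists by pushing every candidate into a regime where the main theorem forces a vanishing conclusion.

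The first step is a fibrewise analysis. Fix $z \in \F_p$ and consider the $z$-slice of the Markoff surface, the conic $C_z : x^2 + y^2 + z^2 = 3xyz$ in the $(x,y)$-plane. Factoring its homogeneous part as $x^2 + y^2 - 3zxy = (x - \alpha y)(x - \alpha^{-1} y)$ with $\alpha + \alpha^{-1} = 3z$ yields a rational parametrisation $x = x(t)$, $y = y(t)$ by $t \in \F_p^*$ (respectively $\F_{p^2}^*$ if $9z^2 - 4$ is not a square modulo $p$), under which each of $\cR_1$ and $\cR_2$ restricts to an inversion $t \mapsto c_i z^2 / t$, so their composition $\cR_1\cR_2$ acts as $t \mapsto \lambda(z)\, t$ for an explicit $\lambda(z)$. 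Hence $E_z := \{(x,y) : (x,y,z) \in E\}$ pulls back to a union of cosets of $H_z := \langle \lambda(z) \rangle$, with $x(t), y(t)$ explicit low-degree rational functions of $t$ and $z$.

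The second step invokes the remaining involution $\cR_3$. Invariance of $E$ under $\cR_3$ forces $(x(t), y(t), 3x(t)y(t) - z) \in E$, so the new third coordinate $z' = 3x(t)y(t) - z$ is a rational function of $t$ and $(x(t), y(t))$ must simultaneously lie on $C_{z'}$. Eliminating $x$ or $y$ via the $z'$-parametrisation yields a polynomial equation in $t$ whose values are confined to a coset of $H_z$, so the main theorem of the paper cuts the admissible $t$ down to $O(1)$ unless $\ord \lambda(z)$ is very small. Iterating over the three coordinate directions, and permuting coordinates via $\rS_3$, reduces everything to the degenerate case in which every $\lambda(z)$ appearing in the projection $\pi_3(E) \subseteq \F_p$ has small multiplicative order.

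The main obstacle is the elimination of this surviving low-order regime, where the polynomial bound becomes trivial. The plan is to exploit that the Vieta jump $\cR_3$ generates many distinct values of $z$ within a single $\Gamma$-orbit whose $\lambda(z)$ are algebraically tied by the Markoff relation itself, and to combine this rigidity with a Stepanov-type or sum-product estimate for the set $\{\lambda(z) : z \in \pi_3(E)\}$; that should force $\ord \lambda(z) = 1$ somewhere, collapsing the orbit onto the fixed-point locus of $\cR_1\cR_2$, which can then be excluded by direct inspection against~\eqref{eq:Markoff}. Closing this final step uniformly in $p$ is the genuine remaining difficulty: it is precisely the gap that in the current paper forces only a conditional refinement of the results of~\cite{BGS1,BGS2} rather than the full Conjecture~\ref{conj:BGS}.
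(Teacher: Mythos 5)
You are being asked about Conjecture~\ref{conj:BGS}, which is exactly that: a conjecture, attributed to Baragar and to Bourgain, Gamburd and Sarnak. The paper does not prove it and it remains open; the authors' own contributions are the unconditional bounds \eqref{eq:except set improve} and \eqref{eq:large comp improve} and the conditional Theorem~\ref{thm:except set2}, which even under Conjecture~\ref{conj:eq_groups} yields only $\#\(\cM_p\setminus\cC_p\)\le(\log p)^B$, not $\cC_p=\cM_p$. So there is no proof in the paper to compare yours with, and your sketch --- which essentially retraces the Bourgain--Gamburd--Sarnak strategy of parametrising each fibre of the Markoff surface by a torus, letting the rotation $\cR_1\cR_2$ act as multiplication by $\lambda(z)$, and controlling the interaction of the fibres via bounds for equations in subgroups --- is not a proof of the conjecture.

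Two concrete gaps. First, Theorem~\ref{thm:MV} cannot ``cut the admissible $t$ down to $O(1)$'': for a single polynomial ($h=1$) it gives $\ll t^{2/3}$ solutions in a subgroup of order $t$, a power saving over the trivial bound $t$ but still unbounded as $t$ grows; an $O(1)$ bound for the relevant fibre equations is precisely the content of the unproven Conjecture~\ref{conj:eq_groups}, which is why Theorem~\ref{thm:except set2} is conditional. (Theorem~\ref{thm:MV} also carries hypotheses --- $t\ge\max\{h^2,c_0(m,n)\}$, $t\le\frac{1}{2}p^{3/4}h^{-1/4}$, $P^{\sharp}$ containing at least two monomials, and $\cG$-independence --- that you would need to verify for the specific eliminants you form in your second step.) Second, and decisively, your final step --- eliminating the regime where every $\lambda(z)$ occurring in an exceptional orbit has small multiplicative order --- is exactly the open core of the problem: the best unconditional statement known there is the component-size lower bound \eqref{eq:large comp improve} of order $(\log p)^{7/9}$, which is nowhere near excluding small components. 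The ``Stepanov-type or sum-product estimate'' you invoke for the set $\{\lambda(z)\}$ is not supplied, and no such argument is currently known. You acknowledge this yourself in your last paragraph, so what you have is an accurate description of the known reduction together with an identification of the hard case, not a proof of Conjecture~\ref{conj:BGS}.
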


Bourgain, Gamburd and Sarnak~\cite{BGS1,BGS2} have obtained several major results towards 
Conjecture~\ref{conj:BGS}, see also~\cite{CGMP,dCIL,dCIM,GMR}. 
For example, by~\cite[Theorem~1]{BGS1} we have 
\begin{equation}
\label{eq:except set}
\#\(\cM_p \setminus \cC_p\) = p ^{o(1)}, \qquad \text{as} \ p \to \infty, 
\end{equation}
and also by~\cite[Theorem~2]{BGS1} we know that Conjecture~\ref{conj:BGS} holds for 
all but maybe at most $X^{o(1)}$ primes $p \le X$ as $X\to \infty$.

The bound~\eqref{eq:except set} has been improved in~\cite[Theorem~1.2]{KMSV} as 
\begin{equation}
\label{eq:except set improve}
\#\(\cM_p \setminus \cC_p\) \le \exp\((\log p)^{2/3+o(1)}\) , \qquad \text{as} \ p \to \infty
\end{equation}

Furthermore, Bourgain, Gamburd and Sarnak~\cite{BGS1,BGS2} have also proved that the size of any connected component
of the graphs $\cX_p$ is at least
\begin{equation}
\label{eq:large comp}
\#\cX_p\ge c (\log p)^{1/3},
\end{equation}
for some absolute constant $c > 0$. 
In turn,  the bound~\eqref{eq:large comp}has been improved in~\cite[Theorem~1.3]{KMSV}  as 
\begin{equation}
\label{eq:large comp improve}
\#\cX_p\ge c (\log p)^{7/9}. 
\end{equation} 

The improvements in~\eqref{eq:except set improve} and~\eqref{eq:large comp improve} are based 
on a bound of Corvaja and Zannier~\cite[Corollary~2]{CoZa},  
 on the number of solutions to the equation 
$$
P(u,v) = 0, \quad  (u,v)\in \cG_1\times \cG_2
$$
where $P$ is a bivariate absolutely irreducible polynomial over the finite field $\F_p$ of $p$ elements 
and $\cG_1,\cG_2  \subseteq \ovFp$  are multiplicative groups in the algebraic closure of $\F_p$,
see also~\cite{GaVo, HBK, MaVy, ShkVyu} for some related results. 

Motivated by the above results and connections, here we
\begin{itemize}
\item  derive  a new bound on the number of solutions in subgroups to a systems of several polynomials which 
covers  under a unified setting the results of~\cite{CoZa, MaVy, ShkVyu}; 
\item obtain an improvement of~\eqref{eq:except set improve} under a very plausible conjecture 
on the number of solutions in subgroups of  some particular equation over $\F_p^*$. 
\end{itemize}

\subsection{New results}

As before, for a prime $p$ we use $\ovFp$ to denote 
the algebraic closure of the finite field $\F_p$ of $p$ elements. 

For a bivariate irreducible polynomial
\begin{equation}
\label{eq:P}
P(X,Y)=\sum_{i+j\le  d} a_{ij}X^{i}Y^{j} \in \ovFp[X,Y]
\end{equation}
of total degree $\deg P \le d$, we 
define $P^{\sharp}(X,Y)$ as the homogeneous polynomial  of degree 
$d^{\sharp}=\min\{ i+j:~a_{ij}\ne 0\}$ 
given by 
\begin{equation}
\label{eq:Pmin}
P^{\sharp}(X,Y)=\sum_{i+j=d^{\sharp}} a_{ij}X^{i}Y^{j}.
\end{equation}
We  also consider the set of polynomials $\cP$:
$$
\cP= 
\{ P(\lambda X,\mu Y) \mid \lambda,\mu
\in\overline{\mathbb{F}}_p^* \}.
$$
Define $g$ as the greatest common divisor of the following set 
of differences
\begin{equation}
\label{eq:gk}
g=\gcd\{ i_1+j_1-i_2 -j_2 ~:~ a_{i_1,j_1}  a_{i_2,j_2}  \ne 0\}.  
\end{equation}

Given a   multiplicative  subgroup  $\cG \subseteq \ovFp$, we say that two polynomials 
$P, Q \in \ovFp[X,Y]$  are {\it $\cG$-independent\/} if 
there is no $(u,v)\in \cG^2$ and $\gamma\in \ovFp^*$ 
such that polynomials $P(X,Y)$ and $\gamma
Q(uX,vY)$ coincide. 

We now fix $h$ polynomials 
\begin{equation}
\label{eq:Pk}
P_k(X,Y) = P(\lambda_k X,\mu_k Y) \in\cP,\qquad  k=1,\ldots,h, 
\end{equation}
which are $\cG$-independent. 

The following result 
generalises a series of previous estimates of a similar type, 
see~\cite{CoZa, GaVo, HBK, MaVy, ShkVyu} and references therein. 

\begin{theorem}\label{thm:MV}
Suppose that $P$ is irreducible,
$$
\deg_X P = m \mand \deg_Y P = n
$$
and also  that $P^{\sharp}(X,Y)$ consists of at least two monomials. 
There exists a constant $c_0(m,n)$, depending only on $m$ and $n$, 
such that for any  multiplicative  subgroup  $\cG \subseteq \ovFp$ 
of order $t = \# \cG$  satisfying 
$$
\frac{1}{2}p^{3/4}h^{-1/4}\ge  t\ge \max\{h^2, c_0(m,n)\},
$$ 
 and 
 $\cG$-independent polynomials~\eqref{eq:Pk}  we have
 $$
\sum_{i=1}^h\#\left\{ (u,v)\in \cG^2 ~:~P_i(u,v)=0\right\} <12mn(m+n) \gmax h^{2/3}t^{2/3}.
$$
\end{theorem}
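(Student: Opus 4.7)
My plan is to extend the Stepanov-style polynomial method used in the Corvaja--Zannier bound~\cite{CoZa} for a single irreducible curve so that it handles the family $P_1,\ldots,P_h$ jointly and extracts the $h^{2/3}$ saving over the na\"\i ve sum of $h$ copies of the single-curve bound. The first step is a geometric reformulation: if $P_{k_1}$ and $P_{k_2}$ gave the same pair of $\mathcal{G}$-cosets $(\lambda_k\mathcal{G},\mu_k\mathcal{G})$, then $P_{k_1}(X,Y)=P_{k_2}(uX,vY)$ with $(u,v)=(\lambda_{k_1}/\lambda_{k_2},\mu_{k_1}/\mu_{k_2})\in\mathcal{G}^2$, contradicting $\mathcal{G}$-independence (with $\gamma=1$). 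So the $h$ coset pairs are pairwise distinct, and the total count $S:=\sum_{i=1}^h N_i$ is the number of points $(U,V)$ on the single irreducible curve $\{P=0\}$ whose coset pair $(U\mathcal{G},V\mathcal{G})$ lies in the prescribed list of $h$ boxes of size $t\times t$.

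Next I would attempt a joint Stepanov construction. I would look for a nonzero polynomial $F(X,Y)$ of bidegree $(D_1,D_2)$, with $D_1,D_2$ of order $h^{1/3}t^{2/3}$, that vanishes to order $L$ at every one of the $S$ solution points; existence would follow from a linear-algebra dimension count once $(D_1+1)(D_2+1)\gtrsim S\binom{L+1}{2}$. The subgroup relation $x^t=1$ on $\mathcal{G}$, combined with the distinctness of the $h$ coset pairs, would be used to share interpolation conditions across the family, cutting the total cost in each variable from the na\"\i ve $h\cdot t^{2/3}$ (one box at a time) down to $h^{1/3}\cdot t^{2/3}$. The hypothesis that $P^{\sharp}$ contains at least two monomials together with the invariant $g$ from~\eqref{eq:gk} governs the behaviour of the grid near the axes and will surface in the multiplicative constants. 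With $F$ in hand, I would restrict it to $\{P=0\}$, invoke the irreducibility of $P$ and the bounds on $t$ (in particular the upper bound $t\le \tfrac12 p^{3/4}h^{-1/4}$, which keeps all relevant degrees well below the characteristic) to preclude identical vanishing, and conclude from a Bezout-type intersection estimate that
\[
S\cdot L \le m D_2 + n D_1,
\]
after which optimising $D_1,D_2,L$ produces the stated $S<12mn(m+n)gh^{2/3}t^{2/3}$.

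The main obstacle in this plan is the joint interpolation step: producing an $F$ of bidegree only $O(h^{1/3}t^{2/3})$ that vanishes simultaneously at solutions spread over $h$ distinct coset boxes, rather than one of bidegree $O(h t^{2/3})$ obtained by concatenating single-box constructions. To exploit the multiplicative structure I would look for a basis of functions on $\overline{\mathbb{F}}_p^*/\mathcal{G}$ (characters, or power-residue symbols of order dividing $t$) that distinguishes the $h$ cosets cheaply, and splice them into $F$ at a cost of $h^{1/3}$ rather than $h$ in each variable. The lower bound $t\ge h^2$ appears to be exactly what makes this splicing feasible, as the cosets must be sufficiently large relative to the number of curves for the existence and Bezout inequalities to balance; handling the $g$-dependence and keeping track of parasitic zeros coming from $P^{\sharp}$ is where I expect most of the technical labour to lie.
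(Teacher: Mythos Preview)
Your high-level strategy (Stepanov auxiliary polynomial, vanishing to high order on the union of the $h$ coset boxes, then B\'ezout) matches the paper, and your reformulation---$h$ pairwise disjoint coset boxes $\cF_i=(\lambda_i^{-1}\cG)\times(\mu_i^{-1}\cG)$ inside a single irreducible curve $\{P=0\}$---is exactly the set-up of Section~2.1. The gap is at the ``main obstacle'' you flag: searching for a general $F$ of bidegree $(D_1,D_2)\sim(h^{1/3}t^{2/3},h^{1/3}t^{2/3})$ will not succeed, and your dimension count $(D_1{+}1)(D_2{+}1)\gtrsim S\binom{L+1}{2}$ followed by $S\cdot L\le mD_2+nD_1$ does not balance. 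The paper's auxiliary polynomial is not general but has the very specific shape
\[
\Psi(X,Y)=Y^{t}\Phi(X/Y,\,X^{t},\,Y^{t}),\qquad \deg_{X/Y}\Phi<A,\ \deg_{X^{t}}\Phi<B,\ \deg_{Y^{t}}\Phi<C,
\]
so that $\deg\Psi\le(B+C-1)t\sim h^{1/3}t^{4/3}$ (not $h^{1/3}t^{2/3}$), while the number of free coefficients is only $ABC$. The point is that $X^{t}$ and $Y^{t}$ are constant on each box $\cF_i$, so on $\cF_i$ the function $\Psi$ collapses to a polynomial in the single variable $X/Y$ with coefficients depending on $(\lambda_i^{-t},\mu_i^{-t})$; this is the precise ``cheap coset detector'' you were looking for, and it reduces the cost of the $D$-fold vanishing on $\cF_i$ to a divisibility condition $P\mid\widetilde R_{k,i}$ (Lemma~2.6). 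The resulting equation count is $\lesssim hmn\bigl(AD+(m{+}n)D^2\bigr)$, and the optimisation is $A\sim t^{2/3}/(g h^{1/3})$, $B=C\sim h^{1/3}t^{1/3}$, $D\sim t^{2/3}/(gmn\,h^{1/3})$, with B\'ezout giving $N_h\le(m+n)(B+C-1)t/D+\#\cM_{\mathrm{sing}}$.

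A second correction: the role of the two-monomial hypothesis on $P^{\sharp}$ and of $g$ is not to control ``parasitic zeros near the axes''. These enter solely in the non-divisibility step $P\nmid\Psi$: since $\Psi$ is invariant under $(X,Y)\mapsto(\rho X,\rho Y)$ for $\rho\in\cG$, if $P\mid\Psi$ then at least $\lfloor t/g\rfloor$ scaled copies of $P$ divide $\Psi$, forcing $(P^{\sharp})^{\lfloor t/g\rfloor}\mid\Psi^{\sharp}$; but $\Psi^{\sharp}$ has at most $AB$ monomials, so Lemma~2.3 yields a contradiction once $gAB\le t$ (which is the origin of the constraint $A\le t^{2/3}/(g h^{1/3})$). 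Finally, the upper bound $t\le\tfrac12 p^{3/4}h^{-1/4}$ ensures $\deg\Psi<p$, and $t\ge h^2$ guarantees the floors in $A,B,C,D$ do not collapse the dimension count.
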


Our next result is conditional on the following:

\begin{conj}
\label{conj:eq_groups}
There exist constants $\ve_0>0$ and $A$ such that for any prime $p$,
any subgroup $\cG \subseteq \ovFp$ with $\#\cG\le p^{\ve_0}$,
and any elements $\alpha_{1,1},\alpha_{1,2},\alpha_{2,1},\alpha_{2,2} \in \ovFp$ satisfying 
\begin{equation}
\label{eq:conj-cond}
\alpha_{1,1}\neq0, \quad \alpha_{1,2}\neq0, \quad  \alpha_{1,1}\alpha_{2,2}-\alpha_{1,2}\alpha_{2,1}\neq0,
\end{equation}
the equation
\begin{equation}
\label{eq:conj}
\frac{\alpha_{1,1}u-\alpha_{1,2}}{\alpha_{2,1}u-\alpha_{2,2}} = v
\end{equation}
has at most $A$ solutions in $u,v\in\cG$.
\end{conj}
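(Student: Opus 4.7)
The plan is to recast equation \eqref{eq:conj} as a four-term $S$-unit relation inside $\cG$ and invoke a positive-characteristic analogue of the Evertse--Schlickewei--Schmidt theorem on unit equations. Clearing the denominator produces $\alpha_{2,1} u v - \alpha_{1,1} u - \alpha_{2,2} v + \alpha_{1,2} = 0$, and introducing the auxiliary variable $z = u v \in \cG$ yields the linear relation
$$
\alpha_{2,1} z - \alpha_{1,1} u - \alpha_{2,2} v + \alpha_{1,2} = 0, \qquad (u,v,z)\in \cG^3,
$$
supplemented by the multiplicative constraint $z = u v$. Solutions split naturally into a \emph{degenerate} part, in which a proper subsum of the four terms vanishes, and a \emph{non-degenerate} part, and I would treat the two separately.

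The degenerate count should be elementary. Each of the finitely many vanishing-subsum patterns either contradicts \eqref{eq:conj-cond} outright (for instance $\alpha_{2,1}z = \alpha_{1,1}u$ together with $\alpha_{1,2} = \alpha_{2,2}v$ forces $\alpha_{1,1}\alpha_{2,2} - \alpha_{1,2}\alpha_{2,1}=0$) or pins one of the variables $u, v$ to a single explicit value, after which $z = u v$ and the residual relation fix the remaining variable in $O(1)$ ways. Summed over all subsum patterns, the degenerate part contributes $O(1)$ uniformly in $\cG$.

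The crux, and the principal obstacle, is the non-degenerate count. In characteristic zero the Evertse--Schlickewei--Schmidt theorem delivers a bound depending only on the number of terms (four) and the torsion-free rank of the ambient group, and since $\cG$ is finite cyclic one would formally obtain the desired $O(1)$. In positive characteristic the theorem fails in general, because Frobenius manufactures infinite families of relations; the hypothesis $\#\cG\le p^{\ve_0}$ is designed precisely to rule out such families, since for $\ve_0$ small enough a Frobenius twist cannot generate a new solution inside a cyclic subgroup this small without forcing an extra algebraic constraint on the coefficients $(\alpha_{i,j})$. Turning this heuristic into a rigorous statement amounts to establishing a quantitative positive-characteristic $S$-unit theorem uniform over cyclic subgroups of size $\le p^{\ve_0}$, and no such theorem is currently in the literature. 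The best unconditional bound available -- either via Stepanov applied to the bilinear curve $\alpha_{2,1}XY - \alpha_{1,1}X - \alpha_{2,2}Y + \alpha_{1,2} = 0$, or directly via Theorem~\ref{thm:MV} -- is $O(t^{2/3})$ with $t = \#\cG$, so closing the gap from $t^{2/3}$ to $O(1)$ is the hard part. I expect it will demand either a genuine breakthrough on $S$-unit equations over $\ovFp$ restricted to small subgroups (in the spirit of Beukers--Schlickewei or Derksen--Masser), or a rigidity argument showing that an unexpectedly large solution set forces $(\alpha_{i,j})$ into an algebraic relation contradicting \eqref{eq:conj-cond}.
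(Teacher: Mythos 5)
This statement is Conjecture~\ref{conj:eq_groups}; the paper does not prove it, and indeed Theorem~\ref{thm:except set2} is explicitly conditional on it. So there is no ``paper's own proof'' to compare against, and your write-up --- which is an honest diagnosis rather than a proof --- is the correct response: you have not closed the argument, and neither do the authors. Your reduction to the four-term relation $\alpha_{2,1}uv-\alpha_{1,1}u-\alpha_{2,2}v+\alpha_{1,2}=0$ in $\cG$ and the observation that the degenerate (vanishing-subsum) configurations are killed by~\eqref{eq:conj-cond} or pin the variables to $O(1)$ values are both sound, and your identification of the non-degenerate count as a positive-characteristic unit-equation problem, with the gap between the Stepanov-type bound $O(t^{2/3})$ and the conjectured $O(1)$ as the real obstruction, matches the state of the art. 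The hypothesis $\#\cG\le p^{\ve_0}$ is indeed there to block Frobenius-generated families, but no uniform Evertse--Schlickewei--Schmidt analogue over small subgroups of $\ovFp$ is known.

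For completeness, what the paper does supply in Section~\ref{sec:comm} is only partial evidence in both directions: a construction of a nine-element set $\cD$ inside a subgroup generated by a square root of $2$ modulo a primitive prime divisor of $2^{24m}-1$, showing that one should expect $A\ge 9$; a pigeonhole argument over cosets showing that $\ve_0$ cannot exceed $1/2$; and a pointer to~\cite{CKSZ}, which yields the conjecture for a set of primes of relative density~$1$ (insufficient here, since Conjecture~\ref{conj:BGS} is already known for almost all primes by~\cite{BGS1}). Your proposal is consistent with all of this; just be explicit, as the paper is, that the statement remains open and that your degenerate/non-degenerate decomposition reduces it to, rather than resolves, the hard non-degenerate case.
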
  

\begin{remark} It is likely that the constant $A$ in Conjecture~\ref{conj:eq_groups}
cannot be taken less than $9$, even for $\cG \subseteq \Fp$ rather than for
$\cG \subseteq \ovFp$, see some heuristic arguments in Section~\ref{sec:comm}.  It is possible that this is optimal and 
Conjecture~\ref{conj:eq_groups} holds with $A=9$.
Also we have $\varepsilon_0\le 1/2$, see Section~\ref{sec:comm}.
\end{remark}

\begin{remark} It is easy to see that using the bound~\eqref{eq:except set improve} instead of~\eqref{eq:except set}
in the argument of the proof of Theorem~\ref{thm:except set2} immediately allows us to relax the condition of 
Conjecture~\ref{conj:eq_groups} to counting solutions in subgroups $\cG \subseteq \F_{p^2}$ 
of order $\#\cG\le  \exp\((\log p)^{2/3+\varepsilon_0}\)$. 
However we believe Conjecture~\ref{conj:eq_groups}  holds as stated. 
\end{remark}  

\begin{theorem}
    \label{thm:except set2}
    If Conjecture~\ref{conj:eq_groups} holds for some $\ve_0$ and $A$, then
for sufficiently large $p$ we have
    $$\#\(\cM_p \setminus \cC_p\) \le (\log p)^B,$$
where $B = 16 \log A  + c$  for an absolute constant $c$. 
\end{theorem}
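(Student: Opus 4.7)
The plan is to follow the overall strategy developed by Bourgain, Gamburd and Sarnak~\cite{BGS1,BGS2} and refined in~\cite{KMSV}, but to replace its key quantitative input — a Corvaja--Zannier type estimate~\cite{CoZa} saying that a polynomial equation has at most $O(t^{2/3})$ solutions in a multiplicative subgroup of order $t$ — by the much stronger uniform bound $A$ provided by Conjecture~\ref{conj:eq_groups}. This replacement collapses the subexponential savings of~\eqref{eq:except set improve} into a polylogarithmic one.

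First I would recall the reduction used in~\cite{BGS1,KMSV}: a triple $(x,y,z) \in \cM_p \setminus \cC_p$ must have a short orbit under the group generated by the transformations in~\eqref{eq:Transf}. The rotation $\cR_3$ fixes $x,y$ and acts on $z$ as the involution $z \mapsto 3xy-z$; paired with a transposition, its iteration has order equal to the multiplicative order of a root $\gamma_3(x,y) \in \ovFp$ of the quadratic $T^2 - 3xyT + 1$, and analogously for $\cR_1,\cR_2$. Membership in the exceptional set forces all three of $\gamma_1(y,z),\gamma_2(x,z),\gamma_3(x,y)$ to lie in multiplicative subgroups of $\ovFp$ of order at most a threshold $T=(\log p)^C$, where $C$ is to be optimized. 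For $p$ large enough one has $T \le p^{\ve_0}$, so Conjecture~\ref{conj:eq_groups} is applicable to each such subgroup $\cG$.

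Next I would extract the relevant M\"obius-type relations. A single rotation sends $(x,y,z)$ to $(x,y,3xy-z)$; expressing the new value of $\gamma_1$ (or $\gamma_2$) in terms of the old one, using the defining quadratics and clearing denominators, produces an equation of exactly the shape~\eqref{eq:conj} with coefficients $\alpha_{i,j}$ determined by the fixed coordinates. Conjecture~\ref{conj:eq_groups} then guarantees at most $A$ solutions $(u,v)\in \cG^2$ per such equation. Cascading this argument over $k$ iterations, at each step the next coordinate is trapped inside an $A$-element set, so after an initial seed enumeration the triple $(x,y,z)$ is pinned down to one of $A^k$ possibilities. Taking $k\sim 16\log\log p$ and summing the initial seed contributions yields $\#(\cM_p\setminus \cC_p)\le (\log p)^{16\log A+c}$ for an absolute constant $c$.

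The main obstacle I anticipate is verifying the non-degeneracy hypotheses~\eqref{eq:conj-cond} along the cascade: one must check that the coefficients $\alpha_{i,j}$ produced by iterating the Markoff rotations really do satisfy $\alpha_{1,1},\alpha_{1,2}\ne 0$ and $\alpha_{1,1}\alpha_{2,2}-\alpha_{1,2}\alpha_{2,1}\ne 0$, so that Conjecture~\ref{conj:eq_groups} applies. Degenerate situations correspond to Markoff triples lying on proper subvarieties — the ``cages'' studied in~\cite{BGS1} — and must be handled separately, either by direct inspection or by applying the unconditional bound~\eqref{eq:except set improve} to this thin locus. A secondary bookkeeping issue is to control the number of possible subgroups $\cG \subseteq \ovFp$ of order $\le T$ (using that $\ovFp$ has at most one subgroup of each finite order) and the number of divisors that enter the cascade; these together fix the absolute constant $c$ in the final exponent.
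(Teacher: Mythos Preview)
Your cascade step does not work as written, and the overall architecture is not the one the paper uses. Conjecture~\ref{conj:eq_groups} bounds the \emph{total} number of pairs $(u,v)\in\cG^2$ satisfying a fixed M\"obius relation by $A$; it does not say that each value of $u$ admits at most $A$ partners $v$. So a single application already caps the number of links of a given type at $A$ --- there is no branching tree, and no factor $A^k$ arises from iteration. Consequently your choice $k\sim16\log\log p$ has no visible source, and the ``seed enumeration $+\,A^k$'' count is unjustified.

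The paper instead argues by contradiction inside a single orbit. Assuming $\#\cR>(\log p)^B$, one locates an orbit whose set $\cX$ of first-coordinate projections has size $M>(\log p)^{B/4}/3$, while $M=p^{o(1)}$ by~\eqref{eq:except set}. For each $x\in\cX$ the recurrence~\eqref{eq:bin rec} has period $t(x)\mid p^2-1$ and value set $\cZ(x)=\{\alpha(x)u+r(x)/(\alpha(x)u):u\in\cH_x\}\subseteq\cX$ of size $\ge t(x)/2$. The Conjecture is invoked \emph{once}, to prove $\#(\cZ(x)\cap\cZ(x'))\le 2A$ for distinct $x,x'$ with $r(x)\ne r(x')$: the coincidence equation reduces to the form~\eqref{eq:conj} via $Z=u/v$, $U=v^{-2}Z^{-1}$, and~\eqref{eq:conj-cond} follows from $r(x)\ne r(x')$. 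The constant $16$ comes from an ingredient you do not mention at all: a smooth-number bound on $\tau_z(p^2-1)$ (Lemma~\ref{lem:tauz}) guaranteeing that at least $0.6M$ of the $x\in\cX$ satisfy $t(x)>4\sqrt{AM}$. Choosing $h\approx\sqrt{M/A}$ such $x$'s, an inclusion--exclusion on $\bigcup_j\cZ(x_j)$ forces this union to exceed $M$, a contradiction. Tracing the exponents through $M_0\to M_0^{1/4}\to\sqrt{M}$ and the threshold $z\ge(\log n)^{2\log(1/\ve)}$ in Lemma~\ref{lem:tauz} is exactly what produces $B=16\log A+c$.
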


\section{Solutions to  polynomial equations in subgroups 
of finite fields}


\subsection{Stepanov's method}

Consider a polynomial $\Phi\in \overline{\mathbb{F}}_p[X,Y,Z]$ such that
$$
\deg_X\Phi<A,\quad \deg_Y\Phi<B,\quad \deg_Z\Phi<C,
$$
that is,
$$
\Phi(X,Y,Z)=\sum_{0 \le a <A} \sum_{0 \le b< B} \sum_{0 \le c<C}\omega_{a,b,c}X^aY^bZ^c. 
$$
We assume 
$$
A<t
$$
where $t= \# \cG$ is the order of the subgroup $\cG\subseteq \F_p^*$, 
and  consider the polynomial 
$$
\Psi(X,Y)=Y^t\Phi(X/Y,X^{t},Y^t). 
$$
Clearly
$$
\deg \Psi \le t + t(B-1)  +  t(C-1) = (B+C-1)t .
$$
We now fix some  $\cG$-independent polynomials~\eqref{eq:Pk} and define 
the sets
\begin{equation}
\label{eq: Fi E}
\cF_i =  \left(\lambda_i^{-1}\cG\times\mu_i^{-1} \cG\right), \quad i =1, \ldots, h, 
\mand \cE =  \bigcup_{i=1}^h \cF_i.
\end{equation}
We also consider the locus of singularity
\begin{align*}
\cM_{sing} = \bigl\{(X,Y) \mid X Y=P(X,Y)=0& \  \text{or} \\
    \frac{\partial{}}{\partial{Y}}P(X, Y)&= P(X,Y)=0\bigr\}.
\end{align*}

\begin{lemma}\label{lem:Msing}
Let $P(X,Y)$ be an irreducible polynomial of bi-degree 
$$
\left(\deg_X P, \deg_Y P\right) = (m,n)
$$ 
and let  $n \ge 1$. 
Then for the cardinality of the set $\cM_{sing}$ the following holds:
$$
\# \cM_{sing} \le (m+n)^2.
$$
\end{lemma}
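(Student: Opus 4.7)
The plan is to decompose the set $\cM_{sing}$ according to the two alternatives in its definition, bound each piece separately, and then add the bounds. Specifically, write $\cM_{sing} = \cM_1 \cup \cM_2$, where
$$
\cM_1 = \{(X,Y) : XY = 0 \text{ and } P(X,Y) = 0\},
$$
$$
\cM_2 = \{(X,Y) : P(X,Y) = 0 \text{ and } \partial P/\partial Y (X,Y) = 0\}.
$$
The target $(m+n)^2$ splits naturally as $(m+n) + (m+n)(m+n-1)$, so I would aim for the bounds $\#\cM_1 \le m+n$ and $\#\cM_2 \le (m+n)(m+n-1)$ individually.

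For $\cM_1$, the main tool is the irreducibility of $P$. Points with $X=0$ correspond to roots of $P(0,Y)$, and $P(0,Y) \equiv 0$ would mean $X \mid P(X,Y)$ in $\overline{\F}_p[X,Y]$; by irreducibility of $P$ this forces $P = cX$, contradicting the assumption $\deg_Y P = n \ge 1$. Hence $P(0,Y)$ is a nonzero polynomial of degree $\le n$ in $Y$, contributing at most $n$ points. An entirely symmetric argument (using $\deg_X P = m$) handles $Y=0$ and contributes at most $m$ points. Summing gives $\#\cM_1 \le m+n$.

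For $\cM_2$, the main tool is B\'ezout's theorem. Assuming $\partial P/\partial Y \not\equiv 0$, the polynomial $\partial P/\partial Y$ has $Y$-degree strictly less than $n$, so in particular it is not divisible by $P$. Irreducibility of $P$ then forces $\gcd(P, \partial P/\partial Y) = 1$ in $\overline{\F}_p[X,Y]$, so the two curves share no component and their intersection in the affine plane is finite. B\'ezout bounds this intersection by
$$
\deg P \cdot \deg (\partial P/\partial Y) \le (m+n)(m+n-1).
$$
Combining, $\#\cM_{sing} \le (m+n) + (m+n)(m+n-1) = (m+n)^2$, as required.

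\textbf{Main obstacle.} The delicate point is the inseparable case, where $\partial P/\partial Y \equiv 0$. In characteristic $p$ this can happen even for irreducible $P$ (for instance $P = Y^p - X$), and then $\cM_2 = \{P=0\}$ is infinite. I expect this case has to be excluded by an implicit hypothesis (namely separability of $P$ in $Y$), or by checking in the application that the polynomials $P_i$ arising in the use of the lemma automatically satisfy $\partial P_i/\partial Y \not\equiv 0$; otherwise the stated bound cannot hold verbatim.
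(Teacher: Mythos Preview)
Your proof is essentially identical to the paper's: the same decomposition $\cM_{sing}=\cM_1\cup\cM_2$, the same bound $m+n$ on $\cM_1$ via degree counts on the axes, and the same B\'ezout bound $(m+n)(m+n-1)$ on $\cM_2$ after observing that irreducibility forces $\gcd(P,\partial P/\partial Y)=1$. Your concern about the inseparable case $\partial P/\partial Y\equiv 0$ is well taken and is not addressed in the paper's proof either; in the intended application the polynomial $P$ arising in the Markoff analysis is separable in $Y$, so the issue does not bite there, but as stated the lemma does tacitly assume separability.
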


\begin{proof}
If the polynomial $P(X,Y)$ is irreducible, then the polynomials $P(X,Y)$ and $\frac{\partial P}{\partial Y}(X,Y)$ are relatively prime. 
Thus the B\'ezout theorem yields the bound $L\le (m+n)(m+n-1)$, where $L$ is the number of roots of the system
$$
 \frac{\partial{}}{\partial{Y}}P(X, Y)= P(X,Y)=0.
$$
Actually, the number of $X$ with $P(X,0)=0$  
is less than or equal to $\deg_X P(X,Y)=m$,  the number of pairs $(0,Y)$ on the curve 
\begin{equation}
\label{eq:curve P}
P(X,Y)=0
\end{equation}
where $P$ is given by~\eqref{eq:P},  is less than or equal to $\deg_Y P(X,Y)=n$. 
The total numbers  of such pairs is at most $L+ m+n \le (m+n)^2$.
\end{proof}

Assume  that the polynomial $\Psi$ and the $\cG$-independent polynomials~\eqref{eq:Pk}
satisfy the following conditions:
\begin{itemize}
\item all pairs  in the set 
$$
\left\{ (X,Y) \in \cE\setminus \cM_{sing} \mid  P(X,Y)=0 \right \}
$$ 
are zeros of orders at least $D$ of the
function $\Psi(X,Y)$ on the curve~\eqref{eq:curve P}; 

\item the polynomials $\Psi(X,Y)$ and $P(X,Y)$ are relatively prime. 
\end{itemize}

If these conditions are satisfied then the  {\it B\'ezout  theorem\/} gives us the upper bound 
$D^{-1} \deg \Psi \deg P+\#\cM_{sing}$ for the number of roots $(x,y)$ of the system
$$
\Psi(X,Y) =P(X,Y)=0, \qquad (X,Y) \in \cG.
$$
Since the polynomials $P_k$ are  $\cG$-independent,  
the sets $\cF_k$ are disjoint and also there is a one-to-one correspondence
between the zeros: 
\begin{align*}
P_k(X,Y)=0, &\ (X,Y)\in\cG^2, \\
& \Longleftrightarrow P(u,v)=0,   \ (u,v) = 
(\lambda_k^{-1}X, \mu_k^{-1} Y)\in\cF_k.
\end{align*}
Therefore,  we obtain the bound 
\begin{equation}
\begin{split}
\label{eq:Nh}
N_h& \le \frac{\deg \Psi  \cdot  \deg P}{D}+\# \cM_{sing}\\
& \le  \frac{(m+n) (B+C-1)t}{D}+\# \cM_{sing} 
\end{split}
\end{equation}
on the total number  of zeros of $P_k$ in $\cG^2$, $k=1,\ldots,h$: 
$$
N_h=\sum_{k=1}^h\#\{ (u,v)\in \cG^2 ~:~P_k(u,v)=0\} .
$$

For completeness,  we present proofs of several results from~\cite{MaVy}
which we use here as well.

\subsection{Some divisibilities and non-divisibilities}

We begin with some simple  preparatory results on the divisibility of 
polynomials.

\begin{lemma}\label{lem-nonzero}
Suppose that $Q(X,Y)\in \F_p[X,Y]$ is an irreducible polynomial such that 
$$
Q(X,Y)\mid\Psi(X,Y)
$$ 
and $Q^{\sharp}(X,Y)$ consists of at least two monomials. 
Then
$$
Q^{\sharp}(X,Y)^{\lfloor t/e \rfloor }\mid\Psi^{\sharp}(X,Y),
$$
where $Q^{\sharp}(X,Y)$ and $\Psi^{\sharp}(X,Y)$ are defined as in~\eqref{eq:Pmin} and $e$ is defined as $g$ in~\eqref{eq:gk}, with respect to $Q(X,Y)$ instead of $P(x,y)$. 
\end{lemma}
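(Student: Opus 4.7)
The plan is to exploit a hidden diagonal-scaling symmetry of $\Psi$. First observe that expanding
$$
\Psi(X,Y) = Y^t\Phi(X/Y,X^t,Y^t) = \sum_{a,b,c}\omega_{a,b,c}\,X^{a+tb}Y^{t-a+tc},
$$
every monomial has total degree $(a+tb)+(t-a+tc)=t(1+b+c)$, a multiple of $t$. Since $\cG\subseteq\ovFp^*$ has order $t$ we have $\gcd(t,p)=1$, so $\ovFp$ contains $t$ distinct $t$-th roots of unity, and $\Psi(\zeta X,\zeta Y)=\Psi(X,Y)$ for every such $\zeta$.

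Applying the corresponding algebra automorphism $(X,Y)\mapsto(\zeta X,\zeta Y)$ of $\ovFp[X,Y]$ to the divisibility $Q\mid\Psi$ yields, for every $t$-th root of unity $\zeta$, an irreducible divisor $Q_\zeta(X,Y):=Q(\zeta X,\zeta Y)$ of $\Psi$. Writing $Q=\sum b_{ij}X^iY^j$, one has $Q_\zeta=\sum b_{ij}\zeta^{i+j}X^iY^j$, so a coefficient comparison shows that $Q_{\zeta_1}$ and $Q_{\zeta_2}$ are scalar multiples of each other if and only if $(\zeta_1/\zeta_2)^{i+j}$ takes the same value for every pair $(i,j)$ with $b_{ij}\neq 0$. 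By the definition of $e$ in~\eqref{eq:gk}, this last condition is equivalent to $(\zeta_1/\zeta_2)^e=1$.

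Consequently the $t$ polynomials $\{Q_\zeta:\zeta^t=1\}$ partition into exactly $t/\gcd(t,e)\ge\lfloor t/e\rfloor$ pairwise non-associate equivalence classes. Choose one representative $Q_{\zeta_1},\dots,Q_{\zeta_N}$ from each class, obtaining $N\ge\lfloor t/e\rfloor$ pairwise coprime irreducible divisors of $\Psi$ in the UFD $\ovFp[X,Y]$, so their product divides $\Psi$: there exists $R\in\ovFp[X,Y]$ with $\Psi=R\cdot\prod_{i=1}^N Q_{\zeta_i}$.

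To finish, I use that the operation $^{\sharp}$ (the minimum-total-degree homogeneous part) is multiplicative on nonzero polynomials over the integral domain $\ovFp[X,Y]$, since the products of the minimum-degree homogeneous pieces cannot cancel, together with the direct computation $(Q_\zeta)^{\sharp}=\zeta^{d^{\sharp}_Q}Q^{\sharp}$. Applying $^{\sharp}$ to the factorization gives $\Psi^{\sharp}=c\,(Q^{\sharp})^N R^{\sharp}$ for a nonzero scalar $c$, which proves $(Q^{\sharp})^{\lfloor t/e\rfloor}\mid\Psi^{\sharp}$. I expect the main step to be the coefficient-level orbit calculation in the second paragraph: identifying the exact criterion for $Q_{\zeta_1}$ and $Q_{\zeta_2}$ to be proportional is precisely where the arithmetic of $e$ enters and is what converts the $t$-fold symmetry of $\Psi$ into the multiplicity $\lfloor t/e\rfloor$ in the conclusion.
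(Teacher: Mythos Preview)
Your proof is correct and follows essentially the same approach as the paper: both exploit the invariance $\Psi(\rho X,\rho Y)=\Psi(X,Y)$ for $\rho\in\cG$ (equivalently, for $t$-th roots of unity) to produce many pairwise non-associate irreducible divisors $Q_\rho$ of $\Psi$, then pass to lowest-degree parts. Your write-up is in fact slightly sharper, giving the exact count $t/\gcd(t,e)\ge\lfloor t/e\rfloor$ of non-associate classes and making explicit the multiplicativity of the ${}^\sharp$ operation, both of which the paper uses implicitly.
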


\begin{proof}  Consider $\rho \in \cG$ and substitute $X=\rho \widetilde{X}$ and $Y=\rho \widetilde{Y}$ in the polynomials $Q(X,Y)$ and $\Psi(X,Y)$. Then 
$$
Q(X,Y)\longmapsto Q_\rho (\widetilde{X},\widetilde{Y})=Q(\rho \widetilde{X},\rho \widetilde{Y}),
$$ 
and
\begin{align*}
\Psi(X,Y) &= \Psi(\rho \widetilde{X},\rho \widetilde{Y})\\
& =(\rho \widetilde{Y})^t\Phi((\rho \widetilde{X})/(\rho \widetilde{Y}),
(\rho \widetilde{X})^{t},(\rho \widetilde{Y})^{t})
=\Psi(\widetilde{X},\widetilde{Y}),
\end{align*}
 because $\rho ^t=1$. Hence for any $\rho \in \cG$ we have
$$
Q_\rho (X,Y)\mid \Psi(X,Y), 
$$ 
and we also note that $Q_\rho (X,Y)$ is irreducible.

Since $Q^{\sharp}(X,Y)$ contains at least two monomials $e\geqslant 1$ is a correctly defined and there exist at least $s = \fl{t/e}$ elements $\rho _1,\ldots,\rho _{s}\in \cG$ such that 
\begin{equation}\label{ratio}
Q_{\rho _i}(X,Y)/Q_{\rho _j}(X,Y)\notin \overline{\mathbb{F}}_p, \qquad 1\le  i<j  \le s .
\end{equation}
Obviously the polynomials $Q_{\rho _1}(X,Y),\ldots, Q_{\rho _{s}}(X,Y)$ are pairwise relatively prime, because they are irreducible and
satisfy~\eqref{ratio}. Polynomials $Q_{\rho _i}^{\sharp}(X,Y)$ are homogeneous of degree $d^{\sharp}$ and the following holds
$$
Q^{\sharp}(X,Y)=\rho _1^{-d^{\sharp}}Q_{\rho _1}^{\sharp}(X,Y)=\ldots = \rho _{s}^{-d^{\sharp}}Q_{\rho _{s}}^{\sharp}(X,Y).
$$ 
So, we have
$$
Q_{\rho _1}(X,Y)\cdot\ldots\cdot Q_{\rho _s}(X,Y)\mid \Psi(X,Y),
$$
consequently, 
$$
Q_{\rho _1}^{\sharp}(X,Y)\cdot\ldots\cdot Q_{\rho _{s}}^{\sharp}(X,Y)\mid \Psi^{\sharp}(X,Y).
$$
Since 
$$
Q_{\rho _1}^{\sharp}(X,Y)\cdot\ldots\cdot Q_{\rho _{s}}^{\sharp}(X,Y)=(\rho _1\cdot\ldots\cdot \rho _{s})^{d^{\sharp}} Q^{\sharp}(X,Y)^{s}
$$
we obtain the desired result.  
\end{proof}

\begin{lemma}\label{lem-hH}
Let $G(X,Y), H(X,Y)\in \F_p[X,Y]$ be two homogeneous polynomials. 
Also suppose that $G(X,Y)$ consists of at least two nonzero monomials, $\deg H < p$ and the number of monomials of the polynomial $H(X,Y)$ does not exceed $s$ for 
some positive integer $s<p$. Then 
$$
G(X,Y)^s\nmid H(X,Y).
$$
\end{lemma}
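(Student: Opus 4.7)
The plan is to dehomogenize the problem to one variable and then invoke a classical fact about multiplicities of roots of sparse polynomials in positive characteristic. Suppose, towards a contradiction, that $G(X,Y)^s \mid H(X,Y)$ with $H \ne 0$. Set $g(X) = G(X,1)$ and $h(X) = H(X,1)$. Because $G$ and $H$ are homogeneous, the $X$-degrees of their monomials are pairwise distinct, so the numbers of nonzero terms in $G$ and $g$ (resp.\ in $H$ and $h$) agree. Thus $g$ has at least two nonzero terms, $h$ has at most $s$ nonzero terms, $\deg h \le \deg H < p$, and the divisibility passes to $g^s \mid h$ in $\ovFp[X]$.

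I next produce a nonzero root of $g$ in $\ovFp$. Writing $g(X) = X^a\widetilde g(X)$ with $\widetilde g(0) \ne 0$, the polynomial $\widetilde g$ inherits at least two nonzero terms from $g$, so $\deg \widetilde g \ge 1$, and any root $\alpha \in \ovFp$ of $\widetilde g$ must satisfy $\alpha \ne 0$. Consequently $(X-\alpha)^s \mid g^s \mid h$.

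The heart of the proof is a sparse polynomial bound: any nonzero $h \in \ovFp[X]$ with exactly $k$ nonzero terms and $\deg h < p$ has no nonzero root of multiplicity exceeding $k-1$. I would prove this by induction on $k$. After dividing by a suitable power of $X$ (which preserves nonzero roots and their multiplicities) we may assume $h(0)\ne 0$; then $h'$ has exactly $k-1$ nonzero terms, because the hypothesis $\deg h < p$ prevents any exponent from being killed modulo $p$. A nonzero root of $h$ of multiplicity $m$ is a root of $h'$ of multiplicity at least $m-1$, so by the inductive hypothesis $m-1 \le k-2$, i.e.\ $m \le k-1$. Applied to our $h$, which has at most $s$ nonzero terms, this forces the multiplicity of $\alpha$ in $h$ to be at most $s-1$, contradicting $(X-\alpha)^s \mid h$. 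The only delicate point in the plan is the sparse polynomial lemma in characteristic $p$, and the hypothesis $\deg h < p$ is exactly what is needed to prevent the derivative from losing terms, ensuring the induction goes through.
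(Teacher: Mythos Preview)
Your argument is correct and follows the same route as the paper's proof: dehomogenize by setting $Y=1$, observe that $G(X,1)$ has a nonzero root, and conclude via the sparsity bound that such a root cannot have multiplicity $\ge s$ in $H(X,1)$. The only difference is that the paper cites~\cite[Lemma~6]{HBK} for the sparse-polynomial multiplicity bound, whereas you supply its (standard) inductive proof directly.
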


\begin{proof}  Let us put $Y=1$. If $G(X,Y)^s\mid H(X,Y)$ then $G(X,1)^s\mid H(X,1)$. 
The polynomial $G(X,1)$ has at least one nonzero root. It has been proved in~\cite[Lemma~6]{HBK}    
that  such a polynomial $H(X,1)$ cannot have a nonzero root of order $s$
and the result follows. 
\end{proof}

\begin{lemma}\label{lem:AB Pk}
If $AB<t/\gmax$ and $\deg \Psi < p$ then for the polynomial $P(X,Y)$ given by~\eqref{eq:P} we have
$$
P(X,Y)\nmid \Psi(X,Y).
$$
\end{lemma}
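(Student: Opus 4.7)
The plan is to argue by contradiction: assume that $P(X,Y) \mid \Psi(X,Y)$. Since $P^{\sharp}$ consists of at least two monomials by the standing hypothesis on $P$, applying Lemma~\ref{lem-nonzero} with $Q=P$ immediately gives
\[
P^{\sharp}(X,Y)^{\lfloor t/g \rfloor} \mid \Psi^{\sharp}(X,Y).
\]

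The next step is to identify $\Psi^{\sharp}$ explicitly. Expanding
\[
\Psi(X,Y) = \sum_{0\le a<A}\sum_{0\le b<B}\sum_{0\le c<C} \omega_{a,b,c}\, X^{a+tb}\, Y^{t-a+tc},
\]
one sees that each monomial has total degree $t(1+b+c)$, so the lowest-degree homogeneous part of $\Psi$ is achieved exactly at $b=c=0$ and is given by
\[
\Psi^{\sharp}(X,Y) = \sum_{0\le a<A} \omega_{a,0,0}\, X^{a} Y^{t-a},
\]
a homogeneous polynomial of degree $t$ with at most $A$ nonzero monomials (distinctness of the exponents is immediate since $A<t$). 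Because $\deg \Psi^{\sharp}=t\le \deg \Psi<p$ and $A\le AB < t/g \le t < p$, Lemma~\ref{lem-hH} applies with $G=P^{\sharp}$, $H=\Psi^{\sharp}$ and $s=A$, yielding
\[
P^{\sharp}(X,Y)^{A} \nmid \Psi^{\sharp}(X,Y).
\]

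Combining the two divisibility statements forces $\lfloor t/g \rfloor < A$. On the other hand, the hypothesis $AB<t/g$ together with $B\ge 1$ gives $A<t/g$, and for positive integers $A,t,g$ this implies $A\le \lfloor t/g \rfloor$, a contradiction. Thus $P\nmid \Psi$, as required. The genuine conceptual content is already packaged in Lemmas~\ref{lem-nonzero} and~\ref{lem-hH}; the only thing one must be careful about is the explicit identification of $\Psi^{\sharp}$ together with the elementary integer-part juggling at the very end, and these are the steps where I would slow down in the write-up.
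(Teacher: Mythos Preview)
The paper itself omits the proof of this lemma, deferring to~\cite{MaVy}; your approach of combining Lemmas~\ref{lem-nonzero} and~\ref{lem-hH} is exactly the intended one and is essentially correct.

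There is one small oversight in your identification of $\Psi^{\sharp}$. You assert that the minimal total degree of a monomial in $\Psi$ is attained at $b=c=0$, but nothing in the hypotheses forces any $\omega_{a,0,0}$ to be nonzero; if all of them vanish, the minimum of $b+c$ over the nonzero terms is some $k>0$, and then $\Psi^{\sharp}$ collects the monomials $X^{a+bt}Y^{t-a+ct}$ with $b+c=k$. The fix is painless: for fixed $k$ these monomials are pairwise distinct (since $A<t$) and there are at most $A\min(B,C)\le AB$ of them. Hence $\Psi^{\sharp}$ has at most $AB$ monomials in every case, and you can apply Lemma~\ref{lem-hH} with $s=AB$ instead of $s=A$ (note $AB<t/g\le t\le\deg\Psi<p$, so the hypothesis $s<p$ is met). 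This yields $\lfloor t/g\rfloor<AB$, directly contradicting $AB<t/g$ since $AB$ is an integer, and the detour through $B\ge 1$ at the end becomes unnecessary.
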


\subsection{Derivatives on some curves}

There we study derivatives on the algebraic curve and define some special differential operators. 
Through this section we use 
$$
 \frac{\partial} {\partial X}, \quad  \frac{\partial} {\partial Y}
 \mand  \frac{d }{dX}$$
 for standard partial derivatives with respect to $X$ and $Y$ and for a derivative  
with respect to $X$ along the curve~\eqref{eq:curve P}.
In particular 
\begin{eqnarray}\label{eq:ddx}
\frac{d}{dX}=\frac{\partial }{\partial X}+\frac{dY}{dX}\frac{\partial}{\partial Y},
\end{eqnarray}
where  by the implicit function theorem from the equation~\eqref{eq:curve P} 
we have
$$
\frac{dY}{dX}=-\frac{\frac{\partial P}{\partial X}(X,Y)}{\frac{\partial P}{\partial Y}(X,Y)}.
$$

We also  define inductively
$$
\frac{d^k}{dX^k}=\frac{d}{dX}\frac{d^{k-1}}{dX^{k-1}}
$$
the $k$-th  derivative on the curve~\eqref{eq:curve P}.

Consider the polynomials $q_k(X,Y)$ and
$r_{k}(X,Y)$, $k\in\mathbb{N}$, which are defined inductively
as
$$
q_1(X,Y)=-\frac{\partial}{\partial X}P(X,Y),\qquad r_1(X,Y)=\frac{\partial}{\partial Y}P(X,Y),
$$
and
\begin{equation}
\begin{split}
\label{eq: qkrk}
q_{k+1}(X,Y)&=\frac{\partial q_k}{\partial X}\left(\frac{\partial P}{\partial Y}\right)^2\\
&\qquad -\frac{\partial q_k}{\partial Y}\frac{\partial P}{\partial X}\frac{\partial P}{\partial Y}
-(2k-1)q_k(X,Y)\frac{\partial^2 P}{\partial X\partial Y}\frac{\partial P}{\partial Y} \\
&\qquad \qquad  \qquad \qquad  \qquad   +(2k-1)q_k(X,Y)\frac{\partial^2 P}{\partial Y^2}\frac{\partial P}{\partial X},\\
r_{k+1}(X,Y)&=r_k(X,Y)\left(\frac{\partial P}{\partial Y}\right)^2=\left(\frac{\partial P}{\partial Y}\right)^{2k+1}.
\end{split}
\end{equation}
We now show by induction that
\begin{equation}\label{eq: dk}
\frac{d^k}{dX^k}Y=\frac{q_k(X,Y)}{r_k(X,Y)}, \qquad  k\in\mathbb{N}.
\end{equation} 
The base of induction is
$$
\frac{d}{dX}Y=-\frac{\frac{\partial}{\partial X}P(X,Y)}{\frac{\partial}{\partial Y}P(X,Y)} =\frac{q_1(X,Y)}{r_1(X,Y)}.
$$
One can now easily verifies that assuming~\eqref{eq: dk} and~\eqref{eq:ddx} we have
\begin{align*}
\frac{d^{k+1}}{dX^{k+1}}Y = \frac{d}{dX}\frac{d^{k}}{dX^{k}}Y=
 \frac{d}{dX} \frac{q_k(X,Y)}{r_k(X,Y)}
=\frac{q_{k+1}(X,Y)}{r_{k+1}(X,Y)}, 
\end{align*}
where $q_{k+1}$ and $r_{k+1}$ are given by~\eqref{eq: qkrk}, which
concludes the induction  and proves the formula~\eqref{eq: dk}.
 
The implicit function theorem gives us the derivatives
$\frac{d^{k+1}}{dX^{k+1}}Y$ at  a point $(X,Y)$ on the algebraic
curve~\eqref{eq:curve P}, if the denominator $r_{k}(X,Y)$ is not
equal to zero. Otherwise $r_k(X,Y)=0$ if and only if the
following system holds
$$
 \frac{\partial{}}{\partial{Y}}P(X, Y)= P(X,Y)=0.
 $$
 
Let us  give the following estimates

\begin{lemma}\label{d:deriv}
For all integers $k \ge 1$, the degrees of the polynomials $q_k(X,Y)$ and $r_k(X,Y)$ satisfy  the  bounds
\begin{align*}
& \deg_X q_k \le (2k-1)m-k,\qquad \deg_Y q_k \le (2k-1)n-2k+2,\\
&\deg_X r_k \le (2k-1)m, \qquad \deg_Y r_k \le (2k-1)(n-1).
\end{align*}
\end{lemma}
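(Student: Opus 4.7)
The plan is to prove both parts of the lemma by induction on $k$, with the bounds on $r_k$ being essentially immediate and those on $q_k$ requiring a term-by-term degree analysis of the recursion~\eqref{eq: qkrk}.

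First I would handle $r_k$. Unfolding the recursion $r_{k+1}=r_k(\partial P/\partial Y)^2$ with base $r_1=\partial P/\partial Y$ gives the closed form
\[
r_k(X,Y)=\left(\frac{\partial P}{\partial Y}\right)^{\!2k-1},
\]
which is in fact already recorded in~\eqref{eq: qkrk}. Since $\deg_X(\partial P/\partial Y)\le m$ and $\deg_Y(\partial P/\partial Y)\le n-1$, the stated bounds $\deg_X r_k\le(2k-1)m$ and $\deg_Y r_k\le(2k-1)(n-1)$ follow at once.

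For $q_k$ I would proceed by induction. The base case $k=1$ is immediate from $q_1=-\partial P/\partial X$, giving $\deg_X q_1\le m-1$ and $\deg_Y q_1\le n$, which match $(2k-1)m-k$ and $(2k-1)n-2k+2$ at $k=1$. For the inductive step, I would assume the stated bounds for $q_k$ and use~\eqref{eq: qkrk} to bound $\deg_X q_{k+1}$ and $\deg_Y q_{k+1}$ term by term, applying the elementary fact that $\deg$ of a product is at most the sum of degrees and of a partial derivative is at most one less. For instance, for the first summand $(\partial q_k/\partial X)(\partial P/\partial Y)^2$ one gets $\deg_X\le\bigl((2k-1)m-k-1\bigr)+2m=(2k+1)m-(k+1)$ and $\deg_Y\le\bigl((2k-1)n-2k+2\bigr)+2(n-1)=(2k+1)n-2(k+1)+2$; the other three summands give exactly the same upper bounds after an analogous short computation (the $(2k-1)$ scalar factors do not affect degrees). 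Taking the maximum across the four terms yields the required inductive bounds for $q_{k+1}$.

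There is no real obstacle: the proof is a routine induction, and the only thing to watch is that all four terms in~\eqref{eq: qkrk} are designed to balance, so each one individually attains (up to the claimed bound) the same $\deg_X$ and $\deg_Y$. The arithmetic has been arranged precisely so that the claimed inequalities are sharp for generic $P$; I would verify once for each of the four terms in the recursion and then collect the bounds.
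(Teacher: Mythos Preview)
Your proposal is correct and follows essentially the same approach as the paper: the bounds on $r_k$ are read off from the closed form $r_k=(\partial P/\partial Y)^{2k-1}$, and the bounds on $q_k$ are obtained by induction via a term-by-term degree analysis of the recursion~\eqref{eq: qkrk}. The paper's version is terser (it records only the inductive inequalities $\deg_X q_k\le\deg_X q_{k-1}+2m-1$ and $\deg_Y q_k\le\deg_Y q_{k-1}+2n-2$ without writing out each summand), but the argument is the same.
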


\begin{proof} 
Direct calculations show that
$$\deg_X q_1\le m-1 \mand \deg_Yq_1\le n,
$$ 
and using~\eqref{eq: qkrk} (with $k-1$ instead of $k$) and examining the degree of each term,  we obtain the  inequalities
\begin{align*}
&\deg_X q_k \le\deg_Xq_{k-1} +2m-1\le  (2k-1)m-k,\\
& \deg_Y q_k \le\deg_{y}q_{k-1} +2n-2\le  (2k-1)n-2k+2.
\end{align*}
We now obtain the desire bounds on $ \deg_X q_k$ and $\deg_Y q_k$ by induction.

For the polynomials $r_{k}$ the statement is obvious.
\end{proof} 

\begin{lemma}\label{P:div}
Let $Q(X,Y)\in \mathbb{F}_p[X,Y]$ be a polynomial such that
\begin{equation}\label{CondQ}
\deg_X Q(X,Y)\le \mu,\quad \deg_Y Q(X,Y)\le \nu, 
\end{equation}
and $P(X,Y)\in \mathbb{F}_p[X,Y]$ be a polynomial such that
$$
\deg_X P(X,Y)\le m,\quad \deg_Y P(X,Y)\le n.
$$
Then the divisibility condition
\begin{equation}\label{DivPQ}
P(X,Y)\mid Q(X,Y)
\end{equation}
on the coefficients of the polynomial $Q(X,Y)$ is equivalent to  a certain system
of not more than
$(\mu+\nu+1)mn$  homogeneous linear algebraic equations in coefficients of $Q(X,Y)$ as variables.  
\end{lemma}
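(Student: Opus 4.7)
The plan is to translate the divisibility condition~\eqref{DivPQ} into a dimension count. Since $\overline{\mathbb F}_p[X,Y]$ is an integral domain and $P\ne 0$, the relation $P\mid Q$ is equivalent to the existence of a (necessarily unique) polynomial $R(X,Y)$ with $Q=P\cdot R$; comparing bidegrees on both sides forces $\deg_X R\le\mu-m$ and $\deg_Y R\le\nu-n$, where we interpret $R$ as the zero polynomial (so that $Q=0$) whenever $\mu<m$ or $\nu<n$.

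Let $\cQ$ denote the $\overline{\mathbb F}_p$-vector space of polynomials of bidegree at most $(\mu,\nu)$, and $\cR$ that of bidegree at most $(\mu-m,\nu-n)$, so that $\dim\cQ=(\mu+1)(\nu+1)$ and $\dim\cR=(\mu-m+1)(\nu-n+1)$ (or $0$ in the degenerate case). The multiplication-by-$P$ map
$$
\Phi\colon\cR\longrightarrow\cQ,\qquad R\mapsto P\cdot R,
$$
is linear and injective, so its image is a subspace of $\cQ$ of codimension $\dim\cQ-\dim\cR$. The condition $P\mid Q$, being equivalent to $Q\in\Phi(\cR)$, is therefore cut out by precisely that many independent homogeneous linear forms in the coefficients of $Q$, which is exactly the type of system claimed by the lemma.

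It only remains to verify the numerical bound on this codimension. In the non-degenerate range a direct expansion gives
$$
\dim\cQ-\dim\cR=n\mu+m\nu+m+n-mn,
$$
and the difference
$$
(\mu+\nu+1)mn-(n\mu+m\nu+m+n-mn)=n\mu(m-1)+m\nu(n-1)+(2mn-m-n)
$$
is a sum of three nonnegative terms for $m,n\ge 1$; the degenerate case is handled by the same monotonic bound $(\mu+1)(\nu+1)\le mn(\mu+\nu+1)$. The whole argument is essentially rank--nullity together with an elementary arithmetic check, so there is no substantive obstacle -- the only mild care needed is around the degenerate case $\mu<m$ or $\nu<n$, where the statement reduces to $Q=0$.
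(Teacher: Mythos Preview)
Your proof is correct and follows essentially the same approach as the paper: both identify the divisibility condition with membership in the image of the injective multiplication-by-$P$ map $\cR\to\cQ$, compute the codimension as $(\mu+1)(\nu+1)-(\mu-m+1)(\nu-n+1)$, and bound this by $(\mu+\nu+1)mn$. You are in fact slightly more careful than the paper, since you explicitly treat the degenerate case $\mu<m$ or $\nu<n$ and give a clean verification of the final inequality.
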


\begin{proof}  The dimension of the vector space $\cL$ of polynomials $Q(X,Y)$ that satisfy~\eqref{CondQ} is equal to $(\mu+1)(\nu+1)$.
Let us call the vector subspace of polynomials $Q(X,Y)$ that satisfy \eqref{CondQ} and~\eqref{DivPQ} by $\widetilde {\cL}$. Because $Q(X,Y)=P(X,Y)R(X,Y)$  where the polynomial $R(X,Y)$ is such that
\begin{equation}\label{eq: Poly R}
\deg_X R(X,Y)\le \mu-m \mand \deg_Y R(X,Y)\le \nu-n,
\end{equation}
then the vector space $\widetilde {\cL}$ is isomorphic
 to the vector space of the coefficients of the polynomials $R(x,y)$
satisfying~\eqref{eq: Poly R}. 
The dimension of the vector space $\widetilde {\cL}$ is equal to 
$$
\dim \widetilde {\cL} = (\mu-m+1)(\nu-n+1).
$$ 
It means that the subspace $\widetilde {\cL}$ of the space $\cL$ is given by a system of 
\begin{align*}
(\mu+1)(\nu+1)&-(\mu-m+1)(\nu-n+1)\\
&=\mu n +\nu m-mn+m+n+1
\le(\mu+\nu+1)mn
\end{align*}
homogeneous linear algebraic equations.
\end{proof}

As in~\cite{MaVy}, we now consider the differential operators: 
\begin{equation}\label{diff-oper}
D_k=\left(\frac{\partial P}{\partial Y}\right)^{2k-1}X^kY^k\frac{d^k}{d X^k},\qquad k \in\N, 
\end{equation}
where,  as before, $\frac{d^k}{d X^k}$ denotes the $k$-th derivative on the algebraic curve~\eqref{eq:curve P} with the local parameter $X$. We note now that the derivative of a polynomial in two variables along a curve is a rational function. As 
one can see  from the inductive formula for $\frac{d^k}{d X^k}$, the result of applying any  operator $D_k$ to a polynomial in two variables is again a polynomial in two variables.

Consider non-negative integers $a, b, c$ such that $ a<A,\, b<B,\, c<C.$  From the formulas~\eqref{eq: dk}
 for derivatives on the algebraic curve~\eqref{eq:curve P} we obtain by induction the following relations
\begin{equation}\label{diff-oper-rel}
\begin{split}
& D_k \left(\frac{X}{Y}\right)^aX^{bt}Y^{(c+1)t}=R_{k,a,b,c}(X,Y)\left(\frac{X}{Y}\right)^{a}X^{bt}Y^{(c+1)t},\\
& D_k \Psi(X,Y)|_{X,Y\in \cF_i}=R_{k,i}(X,Y)|_{X,Y\in \cF_i},
\end{split}
\end{equation}  
where $\cF_i$ from formula~\eqref{eq: Fi E},
\begin{equation}
\label{Rki}
\begin{split}
R_{k,i}(X,Y) &= \sum_{0 \le a < A} 
\sum_{0 \le b< B} \\
&\qquad \sum_{0 \le c<C} \omega_{a,b,c}R_{k,a,b,c}(X,Y)\left(\frac{X}{Y}\right)^{a}\lambda_i^{-bt}\mu_i^{-(c+1)t}
\end{split}
\end{equation} 
for some coefficients $\omega_{a,b,c}\in \F_p$, $a<A$, $b<B$, $c<C$, and $\lambda_i,\mu_i$ from~\eqref{eq: Fi E}.

We now define 
\begin{equation}
\label{eq: poly R-tilde}
\widetilde{R}_{k,i}(X,Y)=Y^{A-1}R_{k,i}(X,Y). 
 \end{equation}

\begin{lemma}\label{lem-R}
The rational functions 
$R_{k,a,b,c}(X,Y)$ and $\widetilde{R}_{k,i}(X,Y)$, given by~\eqref{diff-oper-rel}
and~\eqref{eq: poly R-tilde}, are polynomials of degrees 
$$
\deg_X R_{k,a,b,c}\le  4km,  \qquad
\deg_Y R_{k,a,b,c}\le  4kn,
$$
and
$$
\deg_X \widetilde{R}_{k,i} \le A+4km,\qquad \deg_Y \widetilde{R}_{k,i}\le A+4kn.
$$
\end{lemma}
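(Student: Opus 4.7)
The plan is to prove the bounds on $R_{k,a,b,c}$ by induction on $k$, and then deduce the bounds on $\widetilde R_{k,i}$ directly from the definition \eqref{eq: poly R-tilde}.

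For the base case $k=1$, I apply $D_1=\partial_Y P\cdot XY\cdot d/dX$ to the function $f=(X/Y)^a X^{bt}Y^{(c+1)t}$, using the chain rule \eqref{eq:ddx} and the explicit partial derivatives $\partial f/\partial X=(a+bt)f/X$ and $\partial f/\partial Y=((c+1)t-a)f/Y$. The result simplifies to
$$D_1 f=\bigl[(a+bt)\,Y\,\partial_Y P-((c+1)t-a)\,X\,\partial_X P\bigr]f,$$
and the bracketed polynomial has $X$-degree at most $m$ and $Y$-degree at most $n$, comfortably within the claimed bounds $4m$ and $4n$.

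For the inductive step, I rewrite the induction hypothesis as $d^k f/dX^k=R_{k,a,b,c}\,f/\bigl((\partial_Y P)^{2k-1}X^kY^k\bigr)$. Applying $d/dX$ once more along the curve \eqref{eq:curve P} via \eqref{eq:ddx} and multiplying by $(\partial_Y P)^{2k+1}X^{k+1}Y^{k+1}$ yields an explicit recursion for $R_{k+1,a,b,c}$ as a polynomial combination of $R_{k,a,b,c}$, $\partial_X R_{k,a,b,c}$, $\partial_Y R_{k,a,b,c}$, $R_{1,a,b,c}$, and the first and second partial derivatives of $P$. The absence of a residual denominator relies on the fact that the $\partial_Y P$ appearing in $dY/dX=-\partial_X P/\partial_Y P$ is absorbed by the extra factors $(\partial_Y P)^2\cdot X\cdot Y$ that distinguish $D_{k+1}$ from $D_k$, which is precisely the cancellation encoded in the recursion \eqref{eq: qkrk} for $q_k,r_k$ and in $r_k=(\partial_Y P)^{2k-1}$. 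Bounding the degree of the resulting polynomial via Lemma~\ref{d:deriv} and counting monomial contributions, one checks that the $X$-degree grows by at most $4m$ and the $Y$-degree by at most $4n$ in passing from $R_{k,a,b,c}$ to $R_{k+1,a,b,c}$, which closes the induction.

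For $\widetilde R_{k,i}$, the definition \eqref{eq: poly R-tilde} combined with \eqref{Rki} gives
$$\widetilde R_{k,i}(X,Y)=\sum_{0\le a<A}\sum_{0\le b<B}\sum_{0\le c<C}\omega_{a,b,c}\,R_{k,a,b,c}(X,Y)\,X^a Y^{A-1-a}\,\lambda_i^{-bt}\mu_i^{-(c+1)t},$$
where the factor $Y^{A-1}(X/Y)^a=X^a Y^{A-1-a}$ is a monomial of $X$-degree $a\le A-1$ and $Y$-degree $A-1-a\le A-1$. Combining with the bounds on $R_{k,a,b,c}$ immediately gives $\deg_X\widetilde R_{k,i}\le 4km+(A-1)\le A+4km$ and likewise $\deg_Y\widetilde R_{k,i}\le A+4kn$. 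The main obstacle is the inductive step: verifying both polynomiality and the precise rate of degree growth requires careful bookkeeping of the powers of $\partial_Y P$, $X$, and $Y$ appearing in the numerator and denominator at each stage, which is made tractable only because of the very explicit form $r_k=(\partial_Y P)^{2k-1}$ together with the sharp degree bounds for $q_k$ supplied by Lemma~\ref{d:deriv}.
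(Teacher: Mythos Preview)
Your approach is correct but is organized differently from the paper's. The paper does not proceed by induction on $k$; instead it writes down in one stroke a Fa\`a di Bruno--type expansion (formula~\eqref{deriv-sumform}) expressing $\frac{d^k}{dX^k}X^{a+bt}Y^{(c+1)t-a}$ as a sum over tuples $(\ell_1,\ldots,\ell_s)$ with $\sum_i\ell_i\le k$, each term carrying a product $\prod_i d^{\ell_i}Y/dX^{\ell_i}=\prod_i q_{\ell_i}/r_{\ell_i}$. Since $r_\ell=(\partial_Y P)^{2\ell-1}$, the total denominator of any such product divides $(\partial_Y P)^{2k-1}$, and the monomial $X^kY^k$ in $D_k$ clears the factor $X^{-k+\sum\ell_i}Y^{-s}$; this gives polynomiality of $R_{k,a,b,c}$ directly, and the degree bounds then follow from the estimates on $\deg q_\ell$ in Lemma~\ref{d:deriv}. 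Your step-by-step recursion is a valid alternative and, if carried through explicitly, actually yields the sharper increment $\deg_X R_{k+1,a,b,c}\le \deg_X R_{k,a,b,c}+2m$, hence $\deg_X R_{k,a,b,c}\le(2k-1)m$. One minor imprecision: in your inductive step you invoke Lemma~\ref{d:deriv}, but that lemma bounds $\deg q_k$ and is tailored to the paper's closed-form expansion; in your scheme the degree growth follows directly from the recursion for $R_{k+1}$ in terms of $R_k$, its partial derivatives, $R_1$, and first and second partials of $P$, with no need for Lemma~\ref{d:deriv}. The treatment of $\widetilde R_{k,i}$ coincides with the paper's.
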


\begin{proof}   We have
\begin{equation}\label{deriv-sumform}
\begin{split}
\frac{d^k}{dX^k}X^{a+bt}Y^{(c+1)t-a}
&=\sum_{(\ell_1,\ldots,\ell_s)} C_{\ell_1,\ldots,\ell_s} X^{a+bt-k+\sum_{i=1}^s \ell_i}\\
& \qquad \quad Y^{(c+1)t-a-s}\left(\frac{d^{\ell_1}Y}{dX^{\ell_1}}\right)\ldots\left(\frac{d^{\ell_s}Y}{dX^{\ell_s}}\right), 
\end{split}
\end{equation}
where $(\ell_1,\ldots,\ell_s)$ runs through the  all $s$-tuples of positive integers   
with $\ell_1+\ldots+\ell_s\le k$, $s=0,\ldots,k$ and $C_{\ell_1,\ldots,\ell_s}$ are some constant coefficients.

By the formula~\eqref{deriv-sumform} and the form of the operator~\eqref{diff-oper} 
we obtain that $R_{k,a,b,c}(x,y)$ are polynomials and $R_{k,i}(x,y)$ are rational functions.
Actually, from the formulas~\eqref{deriv-sumform} and~\eqref{eq: dk} we easily obtain that the denominator of 
$$
\frac{d^k}{dX^k} \left(\frac{X}{Y}\right)^aX^{bt}Y^{(c+1)t}
$$
divides $\left(\frac{\partial P}{\partial Y}(X,Y) \right)^{2k-1}$. We obtain that $R_{k,a,b.c}(X,Y)$ are polynomials. From the formula~\eqref{Rki} we obtain that $R_{k,i}$ is a rational function with denominator divided by $Y^{A-1}$. Consequently, $\widetilde{R}_{k,i}$ are polynomials.

The result now follows from Lemma~\ref{d:deriv} and the 
formulas~\eqref{diff-oper} and~\eqref{diff-oper-rel}. \end{proof}

\subsection{Multiplicities points on some curves}

\begin{lemma}\label{deriv:k-th}
If $P(X,Y)\mid\Psi(X,Y)$ and $P(X,Y)\mid D_j\Psi(X,Y)$, $j=1,\ldots,k-1$, then at least one of the following alternatives holds: 
\begin{itemize}
\item either $(x,y)$ is a root of  order at least $k$ of  $\Psi(X,Y)$ on
the algebraic curve~\eqref{eq:curve P};
\item or $(x,y) \in \cM_{sing}$.
\end{itemize}
\end{lemma}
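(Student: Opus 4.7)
\medskip

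\textbf{Proof plan for Lemma~\ref{deriv:k-th}.}

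The plan is to unwind the definition of the differential operators $D_j$ from~\eqref{diff-oper} and translate the divisibility hypotheses into the vanishing of successive derivatives along the curve. Concretely, for any polynomial $F$, one has
$$
D_j F \;=\; \left(\frac{\partial P}{\partial Y}\right)^{2j-1} X^j Y^j \,\frac{d^j F}{dX^j},
$$
so that on the algebraic curve~\eqref{eq:curve P} the value of $D_j \Psi$ at a point $(x,y)$ factors as a product of $\left(\partial P/\partial Y\right)^{2j-1}(x,y)$, $x^j$, $y^j$, and the $j$-th derivative $d^j\Psi/dX^j$ evaluated at $(x,y)$. This is the identity I would write down first.

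Next, I would observe that the hypotheses $P \mid \Psi$ and $P \mid D_j\Psi$ for $j = 1, \dots, k-1$ mean precisely that each of the polynomials $\Psi, D_1\Psi, \dots, D_{k-1}\Psi$ vanishes identically on the curve $P(X,Y) = 0$. Thus at the given point $(x,y)$ lying on the curve,
$$
\Psi(x,y) \;=\; D_1\Psi(x,y) \;=\; \cdots \;=\; D_{k-1}\Psi(x,y) \;=\; 0.
$$

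Now suppose $(x,y) \notin \cM_{sing}$. By the definition of $\cM_{sing}$, this forces $x \neq 0$, $y \neq 0$, and $\left(\partial P/\partial Y\right)(x,y) \neq 0$. Therefore the prefactor $\left(\partial P/\partial Y\right)^{2j-1} X^j Y^j$ is non-zero at $(x,y)$ for every $j \geq 1$, and we may divide it off to conclude
$$
\frac{d^j \Psi}{dX^j}(x,y) \;=\; 0, \qquad j = 1,\dots, k-1,
$$
together with $\Psi(x,y) = 0$. Since $(x,y) \notin \cM_{sing}$ is a smooth point of the curve with $X$ as a local parameter (the implicit function theorem applies because $\partial P/\partial Y(x,y) \neq 0$), the vanishing of $\Psi$ together with its first $k-1$ derivatives $d^j/dX^j$ along the curve means exactly that $(x,y)$ is a zero of order at least $k$ of $\Psi$ on the curve, which is the first alternative of the lemma. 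If instead $(x,y) \in \cM_{sing}$, the second alternative holds and there is nothing to prove.

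The only real subtlety is verifying that $D_j$ really does factor as claimed with a polynomial output (not merely a rational function), but this is already handled by the inductive formulas~\eqref{eq: qkrk}--\eqref{eq: dk} and the way $D_j$ multiplies through by $(\partial P/\partial Y)^{2j-1}$ to clear denominators; this was in effect the content of the preceding Lemma~\ref{lem-R}. Thus the proof reduces to a clean bookkeeping step, and I anticipate no technical obstacle beyond making the factorisation of $D_j$ explicit and invoking non-singularity to divide by the prefactor.
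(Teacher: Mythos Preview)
Your proposal is correct and follows essentially the same route as the paper: factor $D_j\Psi$ as $\left(\partial P/\partial Y\right)^{2j-1}X^jY^j\,\dfrac{d^j\Psi}{dX^j}$, use the divisibility hypotheses to get vanishing on the curve, and then split into the cases where the prefactor vanishes (giving $(x,y)\in\cM_{sing}$) or where it does not (giving $\dfrac{d^j\Psi}{dX^j}(x,y)=0$ for $j=0,\ldots,k-1$ and hence order at least $k$). The paper phrases this as a trichotomy on which factor of $D_j\Psi$ vanishes, whereas you phrase it as a contrapositive assuming $(x,y)\notin\cM_{sing}$, but the content is identical.
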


\begin{proof} If $D_j\Psi(X,Y)$ vanishes on the curve $P(X,Y)= 0$, then either
\begin{equation}
\label{eq:altern1}
\frac{d^j}{dX^j}\Psi (x,y)=0,
\end{equation}
where, as before, $\frac{d^j}{d X^j}$ is $j$-th derivative on the algebraic curve~\eqref{eq:curve P} with the local parameter $X$, 
or
\begin{equation}
\label{eq:altern2}
xy=0, 
\end{equation}
or 
\begin{equation}
\label{eq:altern3}
\frac{\partial P}{\partial Y}(x,y)=0,
\end{equation}
on the curve~\eqref{eq:curve P}. 

If we have~\eqref{eq:altern1} 
for $j=1,\ldots,k-1$  and also $\Psi(x,y)=0$ 
 then the pair $(x,y)$ satisfies  the first case of conditions of Lemma~\ref{deriv:k-th}.

If we have~\eqref{eq:altern2} or~\eqref{eq:altern3} 
on the curve~\eqref{eq:curve P}  then the pair $(x,y)$ satisfies  the second case of conditions of Lemma~\ref{deriv:k-th}. 
\end{proof}

\section{Small divisors of integers}

\subsection{Smooth numbers} As usual, we say that a positive integer is   $y$-smooth
if it is composed of prime numbers  up to $y$. Then we denote by $\psi(x,y)$
the number of $y$-smooth positive integers that are up to $x$.
Among a larger variety of bounds and asymptotic formulas for  $\psi(x,y)$,
see~\cite{Gran, HT, Ten},   the most convenient for our applications bound is given 
by~\cite[Theorem~5.1]{Ten}.

\begin{lem}\label{lem:psi} There is an absolute constant $c_0$ such that  
for any fixed real positive $x\ge y \ge 2$ we 
have 
$$
\psi(x,y) \le c_0 e^{-u/2} x
$$
where 
$$
u = \frac{\log x}{\log y}.
$$
\end{lem}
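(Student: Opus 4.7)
The plan is to apply Rankin's method, which for $y$-smooth numbers gives, for every $\sigma \in (0,1)$,
$$
\psi(x,y) \le x^\sigma \sum_{\substack{n \ge 1\\ P(n)\le y}} n^{-\sigma} = x^\sigma \prod_{p\le y}\bigl(1-p^{-\sigma}\bigr)^{-1},
$$
where $P(n)$ denotes the greatest prime factor of $n$. This inequality uses $(x/n)^\sigma \ge 1$ on the range of summation together with the Euler product over the $y$-smooth multiplicative semigroup. The task reduces to choosing $\sigma$ so that the right-hand side is at most $c_0 x e^{-u/2}$ with an absolute $c_0$.

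My first step is the natural choice $\sigma = 1 - 1/(2\log y)$, valid for $y \ge 3$ (the case $y=2$ follows immediately from $\psi(x,2) = O(\log x)$, since $e^{-u/2}x = x^{1-1/(2\log 2)}$ is a positive power of $x$). With this choice the prefactor is exactly $x^\sigma = xe^{-u/2}$, so I only need to bound the Euler product by an absolute constant. Using $p^{-\sigma} \le \sqrt{e}/p$ for $p\le y$ and $-\log(1-p^{-\sigma}) \ll 1/p$, Mertens' theorem gives
$$
\sum_{p\le y}\log\bigl(1-p^{-\sigma}\bigr)^{-1} \ll \sum_{p\le y}\frac{1}{p} = \log\log y + O(1),
$$
and this naive estimate yields only $\psi(x,y) \ll (\log y)^C e^{-u/2} x$ for some absolute $C$.

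The main obstacle is to remove this spurious $(\log y)^C$ factor and replace it by an absolute constant. I would handle it by a dichotomy on $u$. When $u$ is sufficiently large compared to $\log\log y$, a sharper Rankin choice --- essentially the saddle-point $\sigma = 1 - (\log u)/\log y$ --- upgrades the estimate to the classical bound $\psi(x,y) \ll x u^{-u(1+o(1))}$; since $\log u \ge 1/2$ on this range we get $u^{-u} \le e^{-u/2}$, and the claim follows. In the complementary regime where $u$ is bounded, $e^{-u/2}$ is itself bounded below by a positive absolute constant, so the trivial inequality $\psi(x,y) \le x$ suffices after enlarging $c_0$. Stitching the two regimes together yields the stated bound with an absolute $c_0$; the detailed saddle-point bookkeeping is precisely the content of~\cite[Theorem~5.1]{Ten}, from which the lemma follows directly.
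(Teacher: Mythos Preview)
The paper does not actually prove this lemma: it simply records the bound as~\cite[Theorem~5.1]{Ten} and moves on. Your proposal ends at exactly the same reference, so the two are in complete agreement; your Rankin sketch and dichotomy on $u$ are a correct outline of the standard argument behind that theorem, but none of it is needed here beyond the citation.
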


\subsection{Number of small divisors of integers}

For a real $z$ and an integer $n$ we use $\tau_z(n)$ to denote the number of
integer positive divisors $d \mid n$ with $d \le z$. We present a bound on  $\tau_z(n)$
for small values of $z$ (which we put in a slightly more general form than we need
for our applications).

\begin{lem}\label{lem:tauz} There is an absolute constant $C_0$ such that 
for any fixed real  positive $\ve < 1$ 
there is $n(\ve)$  such that if $n\ge n(\ve,b)$ and
    $z \ge  (\log n)^{2 \log (1/\ve)}$ then
    $$
    \tau_z(n) \le C_0 \ve z.
    $$
\end{lem}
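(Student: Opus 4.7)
The plan is to bound divisors of $n$ up to $z$ by splitting off the divisors that contain a large prime factor, iterating the resulting recursion, and summing a geometric series.

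\textbf{Step 1 (Recursion).} Fix a parameter $y\ge 2$. I would split the divisors $d\mid n$ with $d\le z$ into the $y$-smooth ones, of which there are at most $\psi(z,y)$, and those possessing a prime factor $p>y$. In the latter case $p\mid n$, $y<p\le z$, and $d/p$ is a divisor of $n/p$ (hence of $n$) of size at most $z/p\le z/y$. Since the distinct prime divisors of $n$ exceeding $y$ multiply to a divisor of $n$, there are at most $\omega_y(n)\le \log n/\log y$ choices for $p$, giving
\[
\tau_z(n)\le \psi(z,y)+\omega_y(n)\,\tau_{z/y}(n).
\]

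\textbf{Step 2 (Iteration and Lemma~\ref{lem:psi}).} Iterating this inequality until $z/y^k<1$ (so that the tail term vanishes) yields, with $u:=\log z/\log y$,
\[
\tau_z(n)\le \sum_{k\ge 0}\omega_y(n)^{k}\,\psi(z/y^k,y).
\]
By Lemma~\ref{lem:psi}, $\psi(z/y^k,y)\le c_0\,e^{-(u-k)/2}(z/y^k)$ (the trivial bound $\psi(x,y)\le x$ absorbs a harmless constant in the terminal range $1\le x<y$). Substituting turns the right-hand side into a geometric series:
\[
\tau_z(n)\le c\,z\,e^{-u/2}\sum_{k\ge 0}\bigl(\omega_y(n)\,e^{1/2}/y\bigr)^{k}.
\]

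\textbf{Step 3 (Choice $y=\log n$).} Finally I would take $y=\log n$. Then $\omega_y(n)\le\log n/\log\log n$, so the common ratio is at most $e^{1/2}/\log\log n\le 1/2$ once $n$ exceeds an absolute threshold, and the sum is bounded by $2$. The hypothesis $z\ge (\log n)^{2\log(1/\ve)}$ gives $u\ge 2\log(1/\ve)$, hence $e^{-u/2}\le \ve$, and therefore $\tau_z(n)\le C_0\,\ve z$ with $C_0$ absolute.

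The only real check lies in the recursion of Step 1: one must verify that, for each prime $p\mid n$ with $p>y$, counting the divisors of $n/p$ up to $z/p$ captures every non-smooth divisor, with the over-counting by divisors carrying several large prime factors fully absorbed into the $\omega_y(n)$ factor. The rest is a routine geometric-series estimate, and the choice $y=\log n$ is essentially forced: a larger $y$ would push $u$ below $2\log(1/\ve)$ under the given hypothesis on $z$, while a smaller $y$ would inflate $\omega_y(n)/y$ and destroy the geometric convergence.
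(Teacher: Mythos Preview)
Your argument is correct, but it takes a different route from the paper's. The paper observes that if $n$ has $s$ distinct prime factors, then replacing those primes by the first $s$ primes $p_1<\cdots<p_s$ gives an injection from divisors of $n$ up to $z$ into $p_s$-smooth integers up to $z$, so that $\tau_z(n)\le\psi(z,p_s)$ in one stroke; since $n\ge p_1\cdots p_s=e^{(1+o(1))p_s}$ one has $p_s\ll\log n\le z^{1/b}$ with $b=2\log(1/\ve)$, and a single application of Lemma~\ref{lem:psi} finishes the proof. Your approach instead sets up the recursion $\tau_z(n)\le\psi(z,y)+\omega_y(n)\,\tau_{z/y}(n)$, iterates it, and sums the resulting geometric series with the choice $y=\log n$. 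Both arguments ultimately feed the same parameter $u\ge 2\log(1/\ve)$ into the smooth-number bound; the paper's injection trick just gets there in one step, while your recursion trades that trick for an iteration. Your method has the mild advantage of using only the elementary estimate $\omega_y(n)\le\log n/\log y$ rather than Chebyshev's bound on $\vartheta(p_s)$, at the cost of a longer argument and a slightly larger absolute constant.
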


\begin{proof} 
 Let $s$ be the number of all distinct prime divisors of $n$ and let $p_1, \ldots, p_s$
 be the first $s$ primes. We note that
\begin{equation}
\label{eq:tau psi}
\tau_z(n) \le \psi(z, p_s).
\end{equation}

By the prime number theorem we have $n \ge p_1\ldots p_s =  \exp(p_s + o(p_s))$
and thus
\begin{equation}
\label{eq:ps z}
p_s  \ll  \log n \le z^{1/b}.
\end{equation}
where $b = 2 \log (1/\ve)$. 
Combining Lemma~\ref{lem:psi}  with~\eqref{eq:tau psi} and~\eqref{eq:ps z}
we see that 
$$
\tau_z(n)  \le \psi(z,z^{1/b+o(1)})\le c_0 e^{-b/2+o(1) } z    = (c_0 + o(1)) e^{-b/2}z \le  C_0 \ve z
$$
for any $C_0> c_0$  (where $c_0$ is as in Lemma~\ref{lem:psi}), 
provided that $n$ and thus $z$ are large enough. 
\end{proof}

\section{Proof of  Theorem~\ref{thm:MV}}

\subsection{Preliminary estimates} 
We define the following parameters:
$$
A=\fl{\frac{t^{2/3}}{\gmax h^{1/3}}} ,\quad B=C=\fl{ h^{1/3}t^{1/3}}, 
\qquad 
D=\fl{ \frac{t^{2/3}}{4 \gmax h^{1/3} mn}}
$$
The exact values of $A$, $B$, $C$ and $D$ play no role until the optimization 
step at the very end of the proof. However it is important to note that their choice ensures~\eqref{eq:aAB t}
and~\eqref{eq: deg Psi} below.  

If $P_i(x,y)=0$ for at least one $i=1,\ldots,h$, then
\begin{equation}\label{Deriv-3}
D_k\Psi(x,y)=0,\qquad  (x,y)\in \bigcup_{i=1}^h\cF_i, 
\end{equation}
with  the operators~\eqref{diff-oper}, 
where the sets $\cF_i$ are as in~\eqref{eq: Fi E}. The condition (\ref{Deriv-3}) 
is given by a system of linear homogeneous algebraic equations in the variables $\omega_{a,b,c}$. 
The number of equations can be calculated by means of 
Lemmas~\ref{P:div} and~\ref{lem-R}.
To satisfy the condition~\eqref{Deriv-3} for some $k$ we have to make sure that the polynomials $\widetilde{R}_{k,i}(X,Y)$, $i=1,\ldots,h$, given by~\eqref{eq: poly R-tilde}, vanish identically on the curve~\eqref{eq:curve P}. The bi-degree of  $\widetilde{R}_{k,i}(X,Y)$ is given by Lemma~\ref{lem-R}: 
$$
\deg_X \widetilde{R}_{k,i} \le A+4km,\qquad \deg_Y \widetilde{R}_{k,i}\le A+4kn.
$$
The number of equations on the coefficients that give us the vanishing of polynomial $\widetilde{R}_{k,i}(X,Y)$ on the curve~\eqref{eq:curve P} is given by Lemma~\ref{P:div} and is equal to
$(\mu+\nu+1)mn$, 
where $\mu,\nu$ are as Lemma~\ref{P:div} and
$$
\mu \le A+4km, \quad \nu \le A+4kn.
$$
Finally, the condition~\eqref{Deriv-3} for some $k$ is given by $h(\mu+\nu+1)mn \le mnh(2A+4k(m+n))$ linear algebraic homogeneous equations.
Consequently, the condition~\eqref{Deriv-3} for all $k=0,\ldots,D-1$ is given by the system of 
$$L = hmn\sum_{k=0}^{D-1}\left(4k(m+n)+2A+1\right)$$
linear algebraic homogeneous equations in variables $\omega_{a,b,c}$.
Now it is easy to see that
\begin{align*}
L&=h\left((2A+1)Dmn+2nm(m+n)D(D-1)\right)\\
& \le  2hADmn+2hmn(m+n)D^2 = 2hmn(AD+(m+n)D^2).
\end{align*}

\subsection{Optimization of parameters} 
The system has a nonzero solution if the number of equations is less than to the number of variables, in particular, if 
\begin{equation}\label{number-of-equ-2}
2hmn(AD+(m+n)D^2)<ABC,
\end{equation}
as we have $ABC$ variables.
It is easy to get an upper bound for the left hand side of~\eqref{number-of-equ-2}. 
For sufficiently large  $t>c_0(m,n)$, where $c_0(m,n)$ is some constant depending only on $m$ and $n$, 
we have
\begin{equation}\label{system_has_a_solution}
\begin{split}
2hmn&(AD+(m+n)D^2)\\
& <2hmn\left(\frac{h^{-1/3}t^{2/3}}{\gmax}\frac{h^{-1/3}t^{2/3}}{4mn\gmax}+(m+n)\frac{h^{-2/3}t^{4/3}}{16m^2n^2\gmax^2}\right)\\
&<\frac{3}{4}\frac{h^{1/3}t^{4/3}}{\gmax^2}. 
\end{split}
\end{equation}
Assuming that $c_0(m,n)$ is large enough, we obtain 
$$
ABC=\left\lfloor \frac{h^{-1/3}t^{2/3}}{\gmax}\right\rfloor \lfloor h^{1/3}t^{1/3}\rfloor^2>\frac{3}{4}\frac{h^{1/3}t^{4/3}}{\gmax^2}, 
$$
which together with ~\eqref{system_has_a_solution} implies~\eqref{number-of-equ-2}.

It is clear that 
\begin{equation}\label{eq:aAB t}
\gmax AB\le t.
\end{equation}
We also require that the degree of the polynomial $\Psi(x,y)$ should be less than $p$,
\begin{equation}\label{eq: deg Psi}
\deg\Psi(x,y)\le (B-1)t+Ct<p.
\end{equation}
Actually, the inequality $(B-1)t+Ct<2h^{1/3}t^{4/3}<p$ is satisfied because $t<\frac{1}{2}p^{3/4}h^{-1/4}$.

Finally, recalling Lemmas~\ref{lem-nonzero}, \ref{lem-hH} and~\ref{lem:AB Pk} and also the irreducibility of the polynomial $P(x,y)$, 
we see that $P_k(X,Y)$ and $\Psi(X,Y)$ are co-prime. Hence, by
Lemmas~\ref{lem:Msing} and~\ref{deriv:k-th} and the inequality~\eqref{eq:Nh} we obtain that $N_h$ satisfies the  inequality
\begin{align*}
N_h & \le \# \cM_{sing} + (m+n)\frac{(B+C-1)t}{D}\\
&< (m+n)^{2} + (m+n) \frac{2h^{1/3}t^{4/3}}{\fl{h^{-1/3}t^{2/3}/(4mn\gmax)}}\\
 & <12mn(m+n) \gmax h^{2/3}t^{2/3}
\end{align*}
for sufficiently large  $t>c_0(m,n)$, which concludes the proof.

\section{Proof of Theorem~\ref{thm:except set2}}

\def\X{\mathbf X}

\subsection{Outline of the proof} Before giving technical details we first outline the sequence
of steps
\begin{itemize}
\item  We consider the set $\cR=\cM_p \setminus \cC_p$  and show that
if  it is large then by Lemma~\ref{lem:tauz}  there is a large set $\cL \subseteq \cR$
elements of large orders.

\item  Each element $x\in\cL$ has an orbit of size $\ge t(x)/2$ which is also in $\cR$.

\item Using Conjecture~\ref{conj:eq_groups}, we estimate the size of  intersections of these orbits for distinct elements $x_1, x_2 \in \cL$.

\item We conclude that all intersections together are small and so to fit them all in $\cR$
the size of $\cR$ must be even larger than we initially assumed.

\end{itemize}

\subsection{Formal argument} 
We always assume that $p$ is large enough. Define the mapping
$$
\cT_0 \(x, y, z\) \mapsto \(x, z, 3xz-y\)
$$
where $\cT_0 = \Pi_{1,3,2}\circ \cR_2$ is the composition of the permutations
$$
\Pi_{1,3,2} = (x, y,z)  \mapsto  (x, z,y)
$$
and the involution
$$
\cR_2: (x, y,z) \mapsto (x, 3xz - y,  z)
$$
 as in the above.

Therefore the orbit $\Gamma(x, y, z)$ of $(x, y,z)$ under the
above group of transformations $\Gamma$ contains, in
particular the triples $(x, u_n, u_{n+1})$, $n = 1, 2, \ldots$,
 where the sequence $u_n$ satisfies a binary linear recurrence relation
\begin{equation}
\label{eq:bin rec}
 u_{n+2} = 3xu_{n+1} - u_n, \qquad n = 1, 2, \ldots,
\end{equation}
 with the initial values, $u_1=y$, $u_2 = z$.
 This also means that  $\Gamma(x, y, z)$ contains all triples obtained
 by the permutations of the elements in  $(x, u_n, u_{n+1})$.

 Let $\xi, \xi^{-1} \in \F_{p^2}^*$ be the roots of the characteristic polynomial
 $Z^2 -3xZ +1$ of  the recurrence relation~\eqref{eq:bin rec}.
 In particular $3x=\xi+\xi^{-1}$. Then, it is easy to see that unless  $(x, y, z)=(0,0,0)$,
 which we eliminate from the consideration, the sequence $u_n$ is
 periodic with period $t(x)$ which   is the order of $\xi$ in $\F_{p^2}^*$.

Let $B$ be a fixed positive number to  be chosen later. 
We denote
$$M_0=(\log p)^B,\quad M_1 = M_0^{1/4}/3
= (\log p)^{B/4}/3.$$


Assume that the remaining set of nodes $\cR=\cM_p \setminus \cC_p$
is of size $\# \cR>M_0$. Note that if $(x,y,z)\in \cR$ then
also  $(y,x,z)\in \cR$ and for any $x,y$ there are at most two
values of $z$ such that $(x,y,z)\in \cR$.
Therefore,  there are more than  $(M_0/2)^{1/2}$ elements $x\in\F_p^*$ with
$(x,y,z)\in \cR$ for some $y, z \in \F_p$.

Since there are obviously at most $T(T+1)/2$ elements $\xi \in \F_{p^2}^*$
of order at most $T$
we conclude that there is a
triple  $(x^*,y^*,z^*)\in \cR$ with
\begin{equation}
\label{eq:large t}
t(x^*)>\sqrt{(M_0/2)^{1/2}} >2M_1,
\end{equation}
where $ t(x^*)$ is the period of the sequence $u_n$  which is defined as in~\eqref{eq:bin rec} with
respect to $(x^*,y^*,z^*)$. 

Then the
orbit  $\Gamma(x^*,y^*,z^*)$ of this triple has at least $2M_1$ elements. Let
$M$ be the cardinality of the set $\cX$ of projections
along the first components of all triples  $(x,y,z) \in  \Gamma(x^*,y^*,z^*)$.
Since the orbits are closed under the permutation of coordinates,
and permutations of the triples
$$
(x^*, u_n, u_{n+1}), \qquad n = 1,   \ldots, t(x^*),
$$
where as above the sequence $u_n$  is defined as in~\eqref{eq:bin rec} with
respect to $(x^*,y^*,z^*)$ and  $ t(x^*)$ is its  period, 
produce the same projection no more than twice
we obtain
\begin{equation}
\label{eq:M and t}
M \ge \frac{1}{2}  t(x^*).
\end{equation}
Recalling~\eqref{eq:large t}, we obtain
\begin{equation}
\label{eq:large M}
M > M_1 = (\log p)^{B/4}/3.
\end{equation}
Using that $(x,y,z)\not\in \cM_p$, we notice, that by the bound~\eqref{eq:except set}
\begin{equation}
\label{eq:small M}
M = p^{o(1)}.
\end{equation}

For $t\mid p^2-1$
we denote $g(t)$ the number of $x\in \cX$ for which the period of the sequence 
 $u_n$  defined as in~\eqref{eq:bin rec}  satisfies $t(x)=t$. Observe that
$$
\sum_{t\mid p^2-1}  g(t) =M.
$$
The same argument as used in the bound~\eqref{eq:M and t}
implies that
\begin{equation}
\label{eq:M and t(2)}
g(t) = 0 \quad\text{for}\quad t>2M.
\end{equation}

We apply Lemma~\ref{lem:tauz} with 
\begin{equation}
\label{eq:eps}
\ve =\frac{1}{40AC_0},
\end{equation}
where $A$ is
a bound from Conjecture~\ref{conj:eq_groups} and $C_0$ is as in Lemma~\ref{lem:psi}. 
take 
\begin{equation}
\label{eq:B}
B=16 \log (1/\ve) + 1.
\end{equation}
Since $g(t)<t$ for any $t$ and also since due to~\eqref{eq:large M} we have 
$$
4\sqrt{AM}>(\log p)^{B/8}\ge (\log (p^2-1))^{2\log (1/\ve)},
$$
by Lemma~\ref{lem:tauz},
\begin{align*}
\sum_{\substack{t\le4\sqrt{AM}\\ t\mid p^2-1}} g(t) &
< \sum_{\substack{t\le4\sqrt{AM}\\ t\mid p^2-1}} t \le 
4\sqrt{AM} \tau_{4\sqrt{AM}} (p^2-1)\\
&  \le C_0 \ve  (4\sqrt{AM})^2 = 0.4M. 
\end{align*}
Hence, we conclude that
$$\sum_{\substack{t > 4\sqrt{AM}\\ t\mid p^2-1}}  g(t) \ge 0.6M.
$$

Let $\cL$ be the set of $x \in \cX$ with $t(x) > 4\sqrt{AM}$.
We have shown that
\begin{equation}
\label{est_L}
\#\cL\ge 0.6M.
\end{equation}

For each $x \in \cL$ we fix some
$y,z \in \F_p$ such $(x,y,z)\in\Gamma (x^*,y^*,z^*)$
and again consider   the sequence
$u_n$, $n=1, 2,\ldots $,  given by~\eqref{eq:bin rec}
and of period $t(x)=t_0$, so we consider the set
$$
\cZ(x) = \{u_n~:~n =1, \ldots, t_0\}. 
$$
Let $\cH_x$ be the subgroup of $\F_{p^2}^*$ of order $t(x)$,
and $\xi(x)$ satisfy the equation $3x=\xi(x)+\xi(x)^{-1}$. One
can easily check, using an explicit expression for binary recurrence sequences
via the roots of the characteristic polynomial, that
$$
\cZ(x) = \left\{\alpha(x) u+\frac{r (x)}{\alpha(x) u}~:~u\in \cH_x\right\},
$$
where
$$r(x)=\frac{(\xi(x)^2+1)^2}{9(\xi(x)^2-1)^2},$$ and
$\alpha(x)\in\F_{p^2}^*$.
If $\xi=\xi_0$ satisfies the equation
$$r= \frac{(\xi^2+1)^2}{9(\xi^2-1)^2},$$
then other solutions are $-\xi_0, 1/\xi_0, -1/\xi_0$.
Moreover, $3x=\xi+\xi^{-1}$
can take, for a fixed $r$, at most two values whose sum is $0$.
Since every value is taken at most twice among the elements
of the sequence $u_n$, $n =1, \ldots, t(x)$, we have
 \begin{equation}
 \label{eq:Z and t0}
\# \cZ(x)  \ge \frac{1}{2} t(x) > 2\sqrt{AM}.
\end{equation}

Now we construct a set $\cL^*\subseteq\cL$. If $x,x^*\in\cL$
and $x+x^*=0$, then we put one of the elements $x,x^*$ in $\cL^*$.
If $x\in\cL$ and $-x\not\in\cL$, then we set $x\in\cL^*$.
Due to~\eqref{est_L}, we get
\begin{equation}
\label{est_L'}
\#\cL^*\ge 0.3M.
\end{equation}
Moreover, for any distinct $x,x^*\in\cL^*$ we have $x+x^*\neq0$
and, hence, $r(x)\neq r(x^*)$.

We claim that under Conjecture~\ref{conj:eq_groups} for any distinct
$x, x^*\in \cL^*$ the inequality
 \begin{equation}
 \label{est_intersect}
\#\(\cZ(x)\bigcap \cZ(x^*)\)\le 2A
\end{equation}
holds.

Indeed, take distinct elements $x,x^*\in\cL^*$.
By $\cG$ we denote the subgroup of $\F_{p^2}^*$ generated by
$\cH_x$ and $\cH_{x^*}$. Notice that due to~\eqref{eq:small M} 
and~\eqref{eq:M and t(2)} we have
 \begin{equation}
\label{eq:small G}
\#\cG = p^{o(1)}.
\end{equation}

Next, $\#(Z(x)\cap Z(x^*)$
is the number of solutions to the equation
$$
\alpha(x) u+\frac{r (x)}{\alpha(x) u}
=\alpha(x^*) v+\frac{r (x^*)}{\alpha(x^*) v}, 
\qquad  (u,v) \in \cH_x \times \cH_{x^*},
$$  
as in the above or, equivalently,
$$
P_{x,x^*}(u,v) = 0,  \qquad (u,v) \in\cH_x\times \cH_{x^*},
$$
where
\begin{align*}
P_{x,x^*}(X,Y) = \alpha(x) ^2\alpha(x^*) X^2 Y  - \alpha(x) &  \alpha(x^*)^2 X Y^2\\
&  - \alpha(x)  r (x^*) X + \alpha(x^*) r(x)Y.
\end{align*}
The number of solutions to the last equation in $(u,v) \in \cH_x\times \cH_{x^*}$ does not exceed
the number of solutions in $(u,v) \in \cG^2$. Let $Z=X/Y$. Then the equation is reduced to
 \begin{equation}
\label{eq:Z,U}
\frac{\alpha(x) ^2\alpha(x^*)Z - \alpha(x)  \alpha(x^*)^2}
{\alpha(x)  r (x^*) Z - \alpha(x^*) r(x)} =U,
\end{equation}
where $U = Y^{-2}Z^{-1}$.

Now we are in position to use Conjecture~\ref{conj:eq_groups}.
The conditions~\eqref{eq:conj-cond} on the coefficients of linear functions in the numerator and in the
denominator of the fraction in~\eqref{eq:Z,U} are satisfied since
$\alpha(x)\neq0$, $\alpha(x^*)\neq0$, and $r(x)\neq r(x^*)$.  

 Also, for large $p$
we have $\#\cG\le p^{\ve_0}$ due to~\eqref{eq:small G}. By Conjecture
\ref{conj:eq_groups}, equation~\eqref{eq:Z,U} has at most $A$ solutions in $Z,Y$.
For each solution thewre are at most two possible values of $Y$. Fixing $Y$,
we determine $X$. So, the inequality~\eqref{est_intersect} holds.

Denote
$$h=[\sqrt{M/A}]+1.$$
Due to~\eqref{eq:large M} and~\eqref{est_L'} we have $\#\cL^*\ge h$
provided that $p$ is large enough. We choose
$h$ elements $x_1,\dots,x_h$ from $\cL^*$.
It follows from~\eqref{est_intersect} that
$j=1,\ldots,h$ we have
 $$
\sum_{i=1}^{j-1} \#\(\cZ\(x_{j}\) \bigcap \cZ\(x_{i}\) \) \le 2(j-1)A.
$$
which implies, by~\eqref{eq:Z and t0}
$$
\#\(\cZ\(x_{j}\)\setminus\bigcup_{i=1}^{j-1}\cZ\(x_{i}\) \) \ge
2\sqrt{AM} - 2(j-1)A.
$$

Observe that
$$\#\left(\bigcup_{j=1}^h \cZ\(x_{j}\)\right)
=\sum_{j=1}^h\#\(\cZ\(x_{j}\)\setminus\bigcup_{i=1}^{j-1}\cZ\(x_{i}\) \).$$
Hence,
\begin{align*}
\#\left(\bigcup_{j=1}^h \cZ\(x_{j}\)\right) &> 2\sqrt{AM}h - (h-1)hA\\
&=(2\sqrt{AM} - (h-1)A)h\\
& > (2\sqrt{AM} - \sqrt{AM}) \sqrt{M/A} >M,
\end{align*}
but this inequality contradicts the definition of $M$. 
Together with the choice of $B$ given by~\eqref{eq:eps} and~\eqref{eq:B}, 
this concludes the proof.  

 \section{Comments}
 \label{sec:comm}

Let $P(n)$ be the largest primitive prime divisor of $2^n-1$, 
that is,  the largest prime  which  divides $2^n-1$, but does not divide any of the 
numbers $2^d-1$ for $1\le d<n$. Note that $P(n) \equiv 1 \pmod n$. 
 By a striking result of Stewart~\cite[Theorem~1.1]{Stew} 
we have
$$
P(n) \ge n \exp\(\frac{\log n}{104 \log \log n}\)
$$
provided that $n$ is large enough. It is also natural to assume that 
$\log P(n)/\log n\to\infty$ for $n\to\infty$. However for us a weaker assumption is sufficient.
Namely as assume that 
$$
\limsup \frac{ \log P(24m)}{\log m}  = \infty. 
$$
We then take  $n=24m$, $m\in\N$, and $p=P(n)$ such that $n= p^{o(1)}$. 
Then $p\equiv 1\pmod {24}$. Since $2$ is a quadratic residue modulo $p$,
we can take $\xi\in\F_p$ such that $\xi^2=2$. We consider a group
$\cG$ generated by $\xi$. Note that $\#\cG=2n = p^{o(1)}$ as
$n\to\infty$. The group $\cG$ contains an element $\zeta_4$ of order
$4$ and an element $\zeta_6$ of order $6$. It is easy to check that
$$((\pm\zeta_4\pm1)/\xi)^8=1.
$$
Thus 
$$
(\pm\zeta_4\pm1)^{2n} = \xi^{6n} = 1.
$$
Hence $\pm\zeta_4\pm1\in\cG$. Also,
$$(\pm\zeta_6-1)^3=1.$$
Hence, similarly $\pm\zeta_6-1\in\cG$. Consider a set $\cD$ consisting of $9$
elements
$$\cD =\{(p-1/2), 1, -2, \zeta_4, -\zeta_4, \zeta_4-1, -\zeta_4-1, \zeta_6-1, -\zeta_6-1\}.$$
Clearly, $x\in\cG, x+1\in\cG$ for any $x\in \cD$. This shows that
probably $A$ in Conjecture~\ref{conj:eq_groups} should be at least $9$.

We also observe that  in Conjecture~\ref{conj:eq_groups} the value of $\varepsilon_0$ cannot be taken greater than $1/2$.

Indeed, suppose that $p$ is a prime and $p-1$ has a divisor $t = p^{\varepsilon_0+o(1)}$, 
as $p\to \infty$ with a fixed 
$\varepsilon_0>1/2$ (the infinitude of such primes follows instantly from~\cite[Theorem~7]{Ford}). 

Let us fix any $\alpha_{1,1},\alpha_{1,2},\alpha_{2,1},\alpha_{2,2} \in \Fp$. 
Clearly   the equation~\eqref{eq:conj} has $N = p + O(1)$ of solutions $(u,v) \in \(\F_p^*\)^2$. 
Let $\cG \subseteq \F_p^*$ be a subgroup  of order $t$.
Since $\mathbb{F}_p^*$ is the union of $(p-1)/t$ cosets $a\cG$ of   $\cG$, 
the direct product $\F_p^*\times \F_p^*$ is the union of $(p-1)^2/t^2$ products of cosets of   $\cG$.  
By the Dirichlet principle that there is at least one product $a\cG\times b\cG$ such that the number of solutions $(u,v) \in a\cG\times b\cG$ (with some $a,b \in \F_p^*$) is not less than 
$$
\frac{N}{(p-1)^2/t^2} \ge  (1+ o(1)) t^2/p  \ge  p^{2\varepsilon_0-1 + o(1)}
$$
and hence is not bounded as $p\to \infty$.  Changing the variables $\widetilde{u}=a^{-1}u$, $\widetilde{v}=b^{-1}v$ in~\eqref{eq:conj} we obtain another equation of the same type 
$$
\frac{\alpha_{1,1}a b^{-1}  \widetilde{u} -\alpha_{1,2} b^{-1} }{\alpha_{2,1}a \widetilde{u}-\alpha_{2,2}} = \widetilde{v}
$$
with an unbounded number of solutions $\(\widetilde{u}, \widetilde{v}\) \in \cG^2$. 

Finally, we note that using~\cite[Theorem~1.2]{CKSZ} one concludes that Conjecture~\ref{conj:eq_groups} 
holds (in much stronger and general form) for a sequence of primes of relative density 1. 
However this does not give any new results for the sets $\cM_p$ because, as we mentioned, 
Bourgain, Gamburd and Sarnak~\cite[Theorem~2]{BGS1} have already shown that 
Conjecture~\ref{conj:BGS} holds for an overwhelming majority of primes $p \le X$ as $X\to \infty$.

\section*{Acknowledgement}


This work was supported by the Australian Research Council Grants DP170100786 and DP180100201 (Shparlinski) and by the Russian Science Foundation  Grant 19-11-00001 (Vyugin).


\end{document}